\newtheorem{theorem}{Theorem}[section]
\newtheorem{proposition}[theorem]{Proposition}
\newtheorem{corollary}[theorem]{Corollary}
\newtheorem{lemma}[theorem]{Lemma}
\newtheorem{algorithm}[theorem]{Algorithm}
\theoremstyle{remark}
\newtheorem{note}[theorem]{Note}
\newtheorem{example}[theorem]{Example}
\renewcommand{\L}{\mathcal{L}}
\def\sA{\mathscr{A}}
\def\sB{{\mathscr{B}}}
\def\scrP{{\mathscr{P}}}
\def\CC{\mathscr{C}}
\def\e{{\mathrm e}}
\def\g{{\mathrm g}}
\def\d{{\mathrm d}}
\def\h{{\mathrm h}}
\def\l{{\mathrm{l}}}
\def\m{{\mathrm m}}
\def\n{{\mathrm n}}
\def\t{{\mathrm t}}
\def\C{{\mathrm C}}
\def\F{{\mathrm F}}
\def\G{{\mathrm G}}
\def\rP{{\mathrm P}}
\def\Cad{{\mathrm {Cad}}}
\def\PF{{\mathrm {PF}}}
\def\rN{{\mathrm N}}
\def\Ap{{\mathrm{ Ap}}}
\def\max{{\mathrm{ max}}}
\def\gcd{{\mathrm{gcd}}}
\def\max{{\mathrm{max}}}
\def\msg{{\mathrm{ msg }}}
\def\Max{{\mathrm{Max}}}
\def\CL{\mathscr{L}}
\def\CL{\mathscr{L}}
\def\sR{\mathscr{R}}
\def\CC{\mathscr{C}}
\def\N{\mathbb{N}}
\def\Z{\mathbb{Z}}
\def\d{\mathrm{d}}
\def\ii{\mathrm{i}}
\def\r{\mathrm{r}}
\def\l{\mathrm{l}}
\def\L{\mathrm{L}}
\def\rank{\mathrm{rank}\, }
\def\Ap{\mathrm{Ap}}
\def\SG{\mathrm{SG}}
\def\Parf{\mathrm{Parf}}
\def\Psat{\mathrm{Psat}}
\def\Arf{\mathrm{Arf}}
\def\Sat{\mathrm{Sat}}
\def\int{\mathrm{int}}
\title{The covariety of perfect numerical semigroups with  fixed Frobenius number}
\author{
	M. A. Moreno-Fr\'{\i}as \footnote{
		Dpto. de Matem\'aticas, Facultad de Ciencias,
		Universidad de C\'adiz, E-11510, Puerto Real  (C\'{a}diz, Spain).
		Partially supported by  Junta de Andaluc\'{\i}a group FQM-298, 
		Proyecto de Excelencia de la Junta de Andalucía ProyExcel\_00868, Proyecto de investigación del Plan Propio--UCA 2022-2023 (PR2022-011) and Proyecto de investigación del Plan Propio--UCA 2022-2023 (PR2022-004).
		E-mail: mariangeles.moreno@uca.es.}
	\and
	J. C. Rosales \footnote{
		Dpto. de \'Algebra, Facultad de Ciencias, Universidad de Granada,
		E-18071, Granada. (Spain).
		Partially supported by  Junta de Andaluc\'{\i}a group FQM-343,
		Proyecto de Excelencia de la Junta de Andalucía ProyExcel\_00868 and Proyecto de investigación del Plan Propio--UCA 2022-2023 (PR2022-011).
		E-mail: jrosales@ugr.es.}
}
\date{}
\begin{document}

\maketitle

\begin{abstract}
	
	Let $S$ be a numerical semigroup.  We will say that  $h\in \N \backslash S$ is an {\it isolated gap }of $S$   if
	$\{h-1,h+1\}\subseteq S.$ A numerical semigroup  without isolated gaps
	is called perfect numerical semigroup.
	Denote by $\m(S)$ the multiplicity of a numerical semigroup $S$.  A covariety is a nonempty family $\CC$ of numerical semigroups that  fulfills  the following conditions: there is the minimum of $\CC,$ the intersection of two elements of  $\CC$  is again an element of  $\CC$ and $S\backslash \{\m(S)\}\in  \CC$  for all $S\in  \CC$ such that  $S\neq \min(\CC).$ 
	
	In this work we prove that the set $\scrP(F)=\{S\mid S \mbox{ is a perfect numerical}\\ \mbox{
		semigroup with Frobenius number }F\}$
	is a covariety. Also, we describe three algorithms which compute: the set $\scrP(F),$ the maximal elements of $\scrP(F)$ and the elements of $\scrP(F)$ with a given genus. 
	
	 A $\Parf$-semigroup (respectively, $\Psat$-semigroup) is a perfect numerical semigroup that in addition is an Arf numerical semigroup (respectively, saturated numerical semigroup). 
	We will prove that the sets:
 $\Parf(F)=\{S\mid S \mbox{ is a $\Parf$-numerical semigroup with Frobenius number} F\}$\,\,\, and \\ $\Psat(F)=\{S\mid S \mbox{ is a  $\Psat$-numerical semigroup with Frobenius number }\\
 F\}$	
are covarieties. As a consequence we present some algorithms to compute $\Parf(F)$ and $\Psat(F).$

\smallskip
    {\small \emph{Keywords:} Perfect numerical semigroup, saturated numerical semigroup, Arf numerical semigroup, 
     covariety, Frobenius number, genus,  algorithm. }

    \smallskip
    {\small \emph{MSC-class:} 20M14, 11D07, 13H10. }
\end{abstract}

\section{Introduction}

\hspace{0.42cm}Let $\Z$ be the set of integers and $\N=\{z\in \Z \mid z\ge 0\}$. A {\it submonoid} of $(\N,+)$ is a subset  of $\N$ which is closed under addition and contains the element $0.$ A {\it numerical semigroup} is a
submonoid $S$ of $(\N,+)$ such that $\N\backslash
S=\{x\in \N \mid x \notin S\}$ has finitely many elements. 

If $S$ is a numerical semigroup, then $\m(S)=\min(S\backslash \{0\})$, $\F(S)=\max\{z\in \Z \mid z \notin S\}$ and $\g(S)=\sharp(\N \backslash S)$ (where $\sharp X $ denotes the cardinality of a set $X$) are 
three important invariants of $S$, called   the {\it multiplicity}, the {\it Frobenius number} and the {\it genus} of $S$, respectively.

If $A$ is a nonempty subset  of $\N$, we denote by $\langle A
\rangle$ the submonoid of $(\N,+)$ generated by $A$. That is,
$\langle A \rangle=\{\lambda_1a_1+\dots+\lambda_na_n \mid n\in
\N\backslash \{0\}, \, \{a_1,\dots, a_n\}\subseteq A \mbox{ and }
\{\lambda_1,\dots,\lambda_n\}\subseteq \N\}.$  In \cite[Lema 2.1]{libro}) it is shown that $ \langle A \rangle$
is a numerical semigroup if and only if $\gcd(A)=1.$ 

If $M$ is a submonoid of $(\N,+)$ and $M=\langle A \rangle$, then we
say that $A$ is a {\it system of generators} of $M$. Moreover, if $M\neq
\langle B \rangle$ for all $B \varsubsetneq A$, then we will say
that $A$ is a {\it minimal system of generators} of $M$. In
\cite[Corollary 2.8]{libro} is shown that every submonoid of
$(\N,+)$ has a unique minimal system of generators, which in
addition it is finite. We denote by $\msg(M)$ the minimal system of
generators of $M$. The cardinality of $\msg(M)$ is called the {\it
	embedding dimension }of $M$ and will be denoted by $\e(M).$

The Frobenius problem (see \cite{alfonsin})  focuses on finding formulas to calculate the Frobenius number and the genus of a numerical semigroup from its minimal  system of generators. The problem was solved in \cite{sylvester} for numerical semigroups with embedding dimension two.  Nowadays, the problem is still open in the case of numerical semigroups with embedding dimension  greater than or equal to three. Furthemore, in this case the problem of computing the Frobenius number of a general numerical semigroup becomes NP-hard (see \cite{alfonsin2}).

Let $S$ be a numerical semigroup.  We will say that  $h\in \N \backslash S$ is an {\it isolated gap }of $S$   if
$\{h-1,h+1\}\subseteq S.$ A numerical semigroup  without isolated gaps
is called a {\it perfect numerical semigroup}.

If $F\in \N\backslash \{0\},$   we denote by
$$\scrP(F)=\{S\mid S \mbox{ is a perfect numerical
	semigroup and }\F(S)=F\}.$$
The main aim of this work is to study the set $\scrP(F).$

In order to collect common properties of some families of numerical semigroups, the concept of covariety was introduced in  \cite{covariedades}. A {\it covariety} is  a nonempty family $\CC$ of numerical semigroups that  fulfills  the following conditions:
\begin{enumerate}
	\item[1)]  There exists the minimum of $\CC,$ with respect to set inclusion.
	\item[2)] If $\{S, T\} \subseteq \CC$, then $S \cap  T \in \CC$.
	\item[3)]  If $S \in \CC$  and $S \neq  \min(\CC)$, then $S \backslash \{\m(S)\} \in \CC$.
\end{enumerate}

In this paper, by using the techniques of covarieties, we study the set of $\scrP(F).$ 

The paper is structured as follows. In Section 2, we will see that $\scrP(F)$ is a covariety and its elements can be ordered in a rooted tree. Additionally, we will see how the children of an arbitrary vertice of this tree are. These results will be used in Section 3 to show three algorithms which compute: the set $\scrP(F)$, the maximals elements of $\scrP(F)$ and the elements of $\scrP(F)$ with a given genus. 

We will say that a set $X$ is a $\scrP(F)$-{\it set} if it verifies two conditions:
\begin{enumerate}
	\item $X \cap \min(\scrP(F))=\emptyset.$
	\item There is $S\in \scrP(F)$ such that $X\subseteq S.$
\end{enumerate}

If $X$ is a $\scrP(F)$-set, in Section 4, we prove that   then there exists the  least element of $\scrP(F)$ (with respect to set inclusion) containing $X.$ This element will be denoted by $\scrP(F)[X]$ and we will say that $X$ is a $\scrP(F)$-{\it system of generators}. It will be shown that the minimal $\scrP(F)$-system of generators, in general, are not unique.  Given an element $S\in \scrP(F),$ we define the $\scrP(F)$-$\rank$ of $S$ as 
$$
 \scrP(F)\rank(S)=\min\{\sharp X \mid X \mbox{ is a }\scrP(F)\mbox{-set and }\scrP(F)[X]=S\}. 
$$

We finish  Section 4, characterizing how the elements of $\scrP(F)$ with $\scrP(F)$-$\rank$ $0$, $1$ and $2$ are.

In the semigroup literature one can find a long list of works dedicated to the study of  one dimensional analytically irreducible domains via their value
semigroup.   One of the properties
studied for this kind of rings using this approach has been  to have the Arf property and to be saturated (see \cite{arf}, \cite{lipman}),\cite{zariski}, \cite{pham}, \cite{campillo}, \cite{delgado} and \cite{nunez}. The characterization of Arf rings and saturated rings, via their value semigroup gave rise to the notion of Arf semigroup and saturated numerical semigroup.

%
%
%
%

Following the notation introduced in \cite{perfectos}, a $\Parf$-semigroup (respectively, $\Psat$-semigroup) is a perfect numerical semigroup that in addition is Arf (respectively, saturated). 

Denote by $\Arf(F)=\{S\mid S \mbox{ is an Arf numerical semigroup and } \F(S)=F\},$ $\Sat(F)=\{S\mid S \mbox{ is a saturated numerical semigroup and } \F(S)=F\},$ $\Parf(F)=\{S\mid S \mbox{ is a $\Parf$-numerical semigroup and } \F(S)=F\}$ and  $\Psat(F)=\{S\mid S \mbox{ is a $\Psat$-numerical semigroup and } \F(S)=F\}.$

By \cite{coarf} and \cite{cosat}, we know that $\Arf(F)$ and $\Sat(F)$ are covariety and additionally, $\min(\Arf(F))=\min(\Sat(F))=\min(\scrP(F)).$ This fact will be used in Section 5, to prove that $\Parf(F)$ and $\Psat(F)$ are also covarieties. Moreover, we present some algorithms to compute all the elements of $\Parf(F)$ and $\Psat(F).$

Throughout this paper, some examples are shown to illustrate the results proven. The computation of these examples are performed by using  the GAP (see \cite{GAP}) package \texttt{numericalsgps} (\cite{numericalsgps}).

\section{The covariety $\scrP(F)$ and its associated tree}

Throughout this work, we suppose  $F$ is a positive integer greater than or equal to $2$.   Our first aim will be to prove that $\scrP(F)$ is a covariety. 

The following result has an immediate proof.

\begin{lemma}\label{lemma1}The numerical semigroup 
	$\Delta(F)=\{0,F+1,\rightarrow\},$ where the symbol $\rightarrow$ means that every integer greater than $F+1$ belongs to the set,  is the minimum of $\scrP(F).$
\end{lemma}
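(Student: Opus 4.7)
The plan is to verify directly that $\Delta(F)$ belongs to $\scrP(F)$ and that it is contained in every other element of $\scrP(F)$; both checks are essentially unpackings of the definitions, so no real obstacle is expected.

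First I would check that $\Delta(F)$ is indeed a numerical semigroup. Closure under addition is immediate: $0$ is the identity, and the sum of any two elements of $\{F+1, F+2, \ldots\}$ is at least $2F+2 > F+1$, hence remains in $\Delta(F)$. Since $\N \setminus \Delta(F) = \{1,2,\ldots,F\}$ is finite, $\Delta(F)$ is a numerical semigroup with $\F(\Delta(F)) = F$.

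Next I would check perfectness. The gaps of $\Delta(F)$ are exactly $1,2,\ldots,F$. For $h$ to be isolated we would need $h-1, h+1 \in \Delta(F)$. If $1 \le h < F$, then $h+1 \le F$ is a gap, so $h$ is not isolated; if $h = F$, then $h-1 = F-1$ is a gap (using $F \ge 2$), so $h$ is not isolated either. Therefore $\Delta(F)$ has no isolated gaps, and it belongs to $\scrP(F)$.

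Finally I would show minimality. Let $S \in \scrP(F)$. Since $\F(S) = F$, every integer greater than $F$ lies in $S$, and $0 \in S$ by definition of a numerical semigroup. Hence $\Delta(F) = \{0\} \cup \{F+1, F+2, \ldots\} \subseteq S$, which proves that $\Delta(F) = \min(\scrP(F))$ with respect to set inclusion.

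The only subtle point is the $F \ge 2$ hypothesis used to rule out $h = F$ as an isolated gap (assumed throughout Section 2 of the excerpt); all other verifications are direct from the definitions.
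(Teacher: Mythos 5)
Your proof is correct and is exactly the direct verification the paper has in mind: the paper gives no argument, labelling the result as having "an immediate proof," and your unpacking of the three checks (semigroup, no isolated gaps via $F\ge 2$, containment in every $S$ with $\F(S)=F$) is the intended one.
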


The next lemma  is well known and it is very easy to prove. 

\begin{lemma}\label{lemma2}
	Let $S$ and $T$ be numerical semigroups and $x\in S.$ Then the following conditions hold:
	\begin{enumerate}
		\item $S\cap T$ is  a numerical semigroup and $\F(S\cap T)=\max\{\F(S), \F(T)\}.$
		\item $S\backslash \{x\}$ is a numerical semigroup if and only if $x\in \msg(S).$
		\item $\m(S)=\min\left( \msg(S)\right).$
	\end{enumerate}
\end{lemma}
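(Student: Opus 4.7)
The plan is to dispatch the three claims in order, each by a short direct argument.

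For (1), I would first check the monoid axioms for $S\cap T$: it contains $0$ since $0\in S\cap T$, and it is closed under addition as the intersection of two submonoids of $(\N,+)$. It is numerical because $\N\setminus(S\cap T)=(\N\setminus S)\cup(\N\setminus T)$ is a finite union of finite sets. For the Frobenius identity, observe that any integer $n>\max\{\F(S),\F(T)\}$ lies in both $S$ and $T$, hence in $S\cap T$, whereas $\max\{\F(S),\F(T)\}$ itself is missing from at least one of $S,T$ and therefore from the intersection.

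For (2), I would treat the two implications separately. If $x\in\msg(S)$, then $x\neq 0$, so $0\in S\setminus\{x\}$, and $S\setminus\{x\}$ is cofinite by construction; the only nontrivial check is closure under addition. A failure would produce $a,b\in S\setminus\{x\}$ with $a+b=x$ and $a,b\neq 0$, forcing $x$ to lie in the submonoid generated by $\msg(S)\setminus\{x\}$, contradicting minimality. Conversely, if $x\notin\msg(S)$, then either $x=0$ (and $S\setminus\{0\}$ fails to contain the identity) or $x$ can be written as $a+b$ with $a,b\in S\setminus\{0,x\}$, and these witnesses violate closure in $S\setminus\{x\}$.

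For (3), I would argue both inequalities: $\m(S)$ cannot be written as a sum of two nonzero elements of $S$ since each summand would be strictly smaller than $\m(S)$, so $\m(S)\in\msg(S)$; and every element of $\msg(S)$ is a nonzero element of $S$, hence at least $\m(S)$. Combining, $\m(S)=\min\msg(S)$. I do not expect a genuine obstacle anywhere; the only subtle point is the degenerate case $x=0$ in~(2), which is handled by the standing convention that $0$ is not a minimal generator.
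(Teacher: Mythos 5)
Your proof is correct; the paper itself gives no argument for this lemma, merely declaring it ``well known and very easy to prove.'' Your three direct verifications (intersection of submonoids, the characterization of minimal generators as elements not expressible as sums of two nonzero elements, and the resulting identification of the multiplicity as the smallest minimal generator) are exactly the standard arguments the authors leave implicit, including the correct handling of the degenerate case $x=0$ in part (2).
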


Next we describe a characterization of perfect numerical semigroups that appears in \cite[Proposition 1]{perfectos}.

\begin{lemma}\label{lemma3}
	Let $S$ be a numerical semigroup. The following conditions are
	equivalent.
	\begin{enumerate}
		\item[{\it 1)}] $S$ is a perfect numerical semigroup.
		\item[{\it 2)}] If $\{s,s+2\}\subseteq S$, then $s+1\in S.$
	\end{enumerate}
\end{lemma}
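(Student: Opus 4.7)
The statement is essentially a reformulation of the definition of perfect numerical semigroup in terms of elements rather than gaps, so I expect the proof to be a direct unpacking of definitions in both directions.

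For the implication $(1) \Rightarrow (2)$, I would argue by contradiction. Suppose $S$ is perfect and $\{s, s+2\} \subseteq S$, but $s+1 \notin S$. Since $0 \in S$, we have $s \geq 0$, so $s+1 \geq 1$, and the element $h := s+1$ is a nonnegative integer not in $S$, i.e.\ a gap. By construction $h-1 = s \in S$ and $h+1 = s+2 \in S$, so $h$ is an isolated gap. This contradicts the hypothesis that $S$ has no isolated gaps.

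For the converse $(2) \Rightarrow (1)$, I would again use the contrapositive. Suppose $S$ has an isolated gap $h$. Then $h \in \N \setminus S$ and $\{h-1, h+1\} \subseteq S$. Setting $s := h-1$ (which is a well-defined nonnegative integer since $0 \in S$ forces any gap to satisfy $h \geq 1$), we have $\{s, s+2\} = \{h-1, h+1\} \subseteq S$, so condition $(2)$ yields $s+1 = h \in S$, contradicting $h \notin S$.

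There is no real obstacle here; the two conditions say the same thing written once from the point of view of the gap $h$ and once from the point of view of its neighbor $s = h-1$. The only minor point to mention explicitly is the boundary check $h \geq 1$, which is automatic because $0 \in S$ and so $h \in \N \setminus S$ forces $h-1 \in \N$. Everything else is a direct translation.
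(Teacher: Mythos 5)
Your proof is correct. The paper itself gives no argument for this lemma (it simply cites Proposition~1 of the reference on perfect numerical semigroups), and your two-way unpacking of the definition --- identifying the gap $h$ with $s+1$ and checking the harmless boundary condition $h\ge 1$ via $0\in S$ --- is exactly the routine verification being omitted.
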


By applying Lemmas \ref{lemma2} and \ref{lemma3}, we can easily deduce the following result. 
\begin{lemma}\label{lemma4}
	If $\{S,T\}\subseteq \scrP(F),$ then $S\cap T \in \scrP(F).$
\end{lemma}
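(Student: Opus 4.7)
The plan is to verify the two defining conditions of $\scrP(F)$ for the intersection $S \cap T$: namely, that it is a numerical semigroup with Frobenius number $F$, and that it is perfect. Both conditions follow almost immediately from the preceding lemmas, so the proof amounts to chaining them together.

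First, I would invoke Lemma \ref{lemma2}(1) to get that $S \cap T$ is a numerical semigroup and that $\F(S \cap T) = \max\{\F(S), \F(T)\} = \max\{F, F\} = F$. This settles the Frobenius number condition.

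Second, to show $S \cap T$ is perfect, I would apply the characterization in Lemma \ref{lemma3}. Take any $s$ with $\{s, s+2\} \subseteq S \cap T$. Then in particular $\{s, s+2\} \subseteq S$, and since $S$ is perfect, Lemma \ref{lemma3} gives $s+1 \in S$. Symmetrically $s+1 \in T$, whence $s+1 \in S \cap T$. By the other direction of Lemma \ref{lemma3}, $S \cap T$ is perfect.

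Combining these two observations shows $S \cap T \in \scrP(F)$. I do not expect any real obstacle here: the argument is essentially a mechanical application of Lemmas \ref{lemma2} and \ref{lemma3}, with the only subtlety being the trivial observation that the characterization of perfectness in Lemma \ref{lemma3} is manifestly closed under intersection because the hypothesis of the implication becomes stronger (being in both $S$ and $T$) while the conclusion needs to be checked in each factor separately.
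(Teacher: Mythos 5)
Your proof is correct and follows exactly the route the paper intends: the paper states that Lemma \ref{lemma4} is "easily deduced" from Lemmas \ref{lemma2} and \ref{lemma3}, and your argument is precisely that deduction (Lemma \ref{lemma2}(1) for the Frobenius number, Lemma \ref{lemma3} for closure of perfectness under intersection). Nothing to add.
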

The following lemma  is straightforward
to prove. 
\begin{lemma}\label{lemma5} Let $S\in \scrP(F)$ such that $S\neq \Delta(F).$ Then $S\backslash \{\m(S)\}\in \scrP(F).$	
\end{lemma}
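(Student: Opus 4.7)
The plan is to verify the three things required: that $S\setminus\{\m(S)\}$ is a numerical semigroup, that its Frobenius number is still $F$, and that it remains perfect (i.e.\ has no isolated gap). Each piece follows from the lemmas just stated, but one has a small subtlety worth isolating.

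First I would observe that by part 3 of Lemma \ref{lemma2}, $\m(S)\in\msg(S)$, so part 2 of the same lemma immediately yields that $S\setminus\{\m(S)\}$ is a numerical semigroup. For the Frobenius number, I would note that since $S\in\scrP(F)$ we have $F+1\in S$, and hence $\m(S)\le F+1$. The case $\m(S)=F+1$ is exactly $S=\Delta(F)$, which is excluded by hypothesis, so $\m(S)\le F$; in particular $F\notin S\setminus\{\m(S)\}$ while every integer $>F$ still belongs to it, so $\F(S\setminus\{\m(S)\})=F$.

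The third and main point is perfection, which I would prove via the criterion of Lemma \ref{lemma3}. Suppose $\{s,s+2\}\subseteq S\setminus\{\m(S)\}$. Since $S$ is perfect, $s+1\in S$, and the only thing to rule out is $s+1=\m(S)$. If that were the case we would have $s=\m(S)-1\in S\setminus\{\m(S)\}$, forcing $\m(S)-1$ to be a positive element of $S$ smaller than $\m(S)$, which contradicts the definition of the multiplicity (here one uses $\m(S)\ge 2$, which holds since $F\ge 2$ rules out $S=\N$). Hence $s+1\in S\setminus\{\m(S)\}$, and by Lemma \ref{lemma3}, $S\setminus\{\m(S)\}\in\scrP(F)$.

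The only step that is not entirely automatic is checking $s+1\ne\m(S)$ in the perfection argument; once that is handled, everything else is an immediate application of the preceding lemmas, which is why the authors label the result as straightforward.
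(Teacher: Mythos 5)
Your proof is correct. The paper offers no argument for this lemma (it is declared ``straightforward to prove''), and your write-up supplies exactly the intended verification: $S\setminus\{\m(S)\}$ is a numerical semigroup with Frobenius number $F$ by Lemma~\ref{lemma2} and the exclusion of $\Delta(F)$, and perfection follows from the criterion of Lemma~\ref{lemma3} once one rules out $s+1=\m(S)$, which you handle correctly (including the $\m(S)\ge 2$ point).
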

As a consequence of Lemmas \ref{lemma1}, \ref{lemma4} and \ref{lemma5}, we have the following result. 
\begin{proposition}\label{proposition6} Whit the above notation, 
$\scrP(F)$ is a covariety and $\Delta(F)$ is its minimum.	
\end{proposition}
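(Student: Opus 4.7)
The plan is direct: the three defining conditions of a covariety have each been isolated as one of the preceding lemmas, so the proposition follows by simply assembling them. Since the proposition also asserts $\min(\scrP(F)) = \Delta(F)$, I would lead with that identification and then check conditions (1), (2), (3) of the definition of covariety in order.

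First, I would invoke Lemma \ref{lemma1} to produce $\Delta(F) = \{0, F+1, \rightarrow\}$ as the minimum of $\scrP(F)$ with respect to set inclusion. This simultaneously discharges condition (1) (the existence of $\min(\scrP(F))$) and pins down the explicit formula $\min(\scrP(F)) = \Delta(F)$, which is the second claim of the proposition. Next, Lemma \ref{lemma4} states precisely that $\scrP(F)$ is closed under pairwise intersection, so condition (2) is immediate.

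For condition (3), I would combine the identification $\min(\scrP(F)) = \Delta(F)$ from the first step with Lemma \ref{lemma5}. The latter says that for every $S \in \scrP(F)$ with $S \neq \Delta(F)$, the set $S \setminus \{\m(S)\}$ again lies in $\scrP(F)$; rewriting $\Delta(F)$ as $\min(\scrP(F))$ turns this verbatim into condition (3).

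There is no substantive obstacle, since all the nontrivial content has been pushed into Lemmas \ref{lemma3}, \ref{lemma4}, and \ref{lemma5}, where the perfect property is checked to behave well under intersection and under removing the multiplicity. The proof itself reduces to a short paragraph referencing those three lemmas and the definition of covariety given in the introduction.
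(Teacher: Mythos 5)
Your proposal is correct and matches the paper exactly: the paper derives Proposition \ref{proposition6} as an immediate consequence of Lemmas \ref{lemma1}, \ref{lemma4} and \ref{lemma5}, which is precisely your assembly of conditions (1), (2) and (3). (Your closing remark cites Lemma \ref{lemma3} where you mean Lemma \ref{lemma1}, but the body of the argument references the right lemmas.)
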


A {\it graph} $G$ is a pair $(V,E)$ where $V$ is a nonempty set and
$E$ is a subset of $\{(u,v)\in V\times V \mid u\neq v\}$. The
elements of $V$ and $E$ are called {\it vertices} and {\it edges},
respectively.

 A {\it path (of
	length $n$)} connecting the vertices $x$ and $y$ of $G$ is a
sequence of different edges of the form $(v_0,v_1),
(v_1,v_2),\ldots,(v_{n-1},v_n)$ such that $v_0=x$ and $v_n=y$.

A graph $G$ is {\it a tree} if there exists a vertex $r$ (known as
{\it the root} of $G$) such that for any other vertex $x$ of $G$
there exists a unique path connecting $x$ and $r$. If  $(u,v)$ is an
edge of the tree $G$, we say that $u$ is a {\it child} of $v$.\\

Define the graph $\G(\scrP(F))$ in the following way:
\begin{itemize}
	\item the set of vertices of $\G(\scrP(F))$ is $\scrP(F)$,
	\item $(S,T)\in \scrP(F) \times \scrP(F)$ is an edge of $\G(\scrP(F))$ if and only if $T=S\backslash \{\m(S)\}.$
\end{itemize}

As a consequence of Propositions \ref{proposition6} and \cite[Proposition 2.6]{covariedades}, we can assert that $\G(\scrP(F))$ is a rooted tree. 
\begin{proposition}\label{proposition7} With the above notation, 
	 $\G(\scrP(F))$ is a tree with root $\Delta(F).$	
\end{proposition}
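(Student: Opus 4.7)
The plan is to deduce this statement essentially as a corollary of Proposition \ref{proposition6}, via the general principle stated in \cite[Proposition 2.6]{covariedades}: for any covariety $\CC$, the graph whose vertices are the elements of $\CC$ and whose edges are the pairs $(S, S\setminus\{\m(S)\})$ is a rooted tree with root $\min(\CC)$. Since Proposition \ref{proposition6} already establishes that $\scrP(F)$ is a covariety with minimum $\Delta(F)$, the conclusion follows at once by instantiation.

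For a self-contained argument, I would proceed in two steps. First, I would show existence of a path from any $S \in \scrP(F)$ to $\Delta(F)$: define $S_0 = S$ and, as long as $S_n \neq \Delta(F)$, set $S_{n+1} = S_n \setminus \{\m(S_n)\}$. By Lemma \ref{lemma5}, each $S_n$ lies in $\scrP(F)$, and by construction $(S_n, S_{n+1})$ is an edge of $\G(\scrP(F))$. Moreover $\m(S_{n+1}) > \m(S_n)$ because we removed the smallest nonzero element of $S_n$. Since $S_n \in \scrP(F)$ implies $\m(S_n) \leq F+1$, this strictly increasing sequence of positive integers bounded above must terminate, necessarily at some $S_k$ with $\m(S_k) = F+1$, which forces $S_k = \Delta(F)$.

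Second, I would verify uniqueness of the path. This is essentially a formal observation: by definition of $\G(\scrP(F))$, for every vertex $S \neq \Delta(F)$ there is exactly one edge with first coordinate $S$, namely $(S, S\setminus\{\m(S)\})$. Consequently, once the starting vertex $S$ is fixed, the sequence of edges of any path emanating from $S$ is forced step by step, so the path from $S$ to $\Delta(F)$ constructed above is the unique one.

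The only point that genuinely requires verification is the termination claim, that is, that the inductive process cannot produce an infinite chain. This is the main (and rather mild) obstacle: it reduces to noting that a numerical semigroup belongs to $\scrP(F)$ only if its multiplicity is at most $F+1$, and that $\Delta(F)$ is the unique element of $\scrP(F)$ attaining multiplicity $F+1$. Once this is in hand, the rooted tree structure with root $\Delta(F)$ is immediate.
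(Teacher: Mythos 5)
Your proposal is correct and its primary route is exactly the paper's: the paper derives Proposition \ref{proposition7} directly from Proposition \ref{proposition6} together with \cite[Proposition 2.6]{covariedades}. The additional self-contained argument you give (termination via the strictly increasing multiplicities bounded by $F+1$, and uniqueness from each non-root vertex having a single outgoing edge) is sound and simply unfolds what that cited general result encapsulates.
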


A tree can be  built recurrently starting from the root  and connecting, 
through an edge, the vertices already built with  their  children. Hence, it is very interesting to characterize the children of an arbitrary vertex of the tree $\G(\scrP(F)).$ For this reason, next  we are going to introduce some  concepts and results that are necessary to understand the work. 

Following the terminology introduced in \cite{JPAA}, an integer $z$ is a {\it pseudo-Frobenius number} of $S$ if $z\notin S$ and $z+s\in S$ for all $s\in S\backslash \{0\}.$  We denote by $\PF(S)$
the set of pseudo-Frobenius numbers of $S.$ The cardinality of $\PF(S)$ is an important invariant of $S$ (see \cite{froberg} and \cite{barucci}) called the {\it type} of $S,$  denoted by $\t(S).$

%

Given a numerical semigroup $S,$ denote by $\SG(S)=\{x\in \PF(S)\mid 2x\in S\}.$
The elements of $\SG(S)$ will be called the {\it special gaps} of $S.$ The following result is Proposition 4.33 from \cite{libro}.

\begin{lemma}\label{proposition9}
	Let $S$ be a numerical semigroup and $x\in \N\backslash S.$ Then $x\in \SG(S)$ if and only if $S\cup \{x\}$ is a numerical semigroup.
\end{lemma}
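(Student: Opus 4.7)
The plan is to prove both implications directly from the definitions of $\PF(S)$, $\SG(S)$, and what it means to be a numerical semigroup. The statement is essentially a bookkeeping exercise about closure under addition, so I do not expect any real obstacle—just careful case splitting.

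For the forward direction, I will assume $x \in \SG(S)$, which by definition means $x \in \PF(S)$ and $2x \in S$. To show $S \cup \{x\}$ is a numerical semigroup I first observe that $\N \setminus (S \cup \{x\}) \subseteq \N \setminus S$ is finite and $0 \in S \cup \{x\}$, so the only nontrivial point is closure under addition. I would check closure by case analysis on whether the two summands $a,b$ lie in $S$ or equal $x$: the case $a,b \in S$ is closure in $S$; the mixed case $a = x$, $b \in S\setminus\{0\}$ gives $a+b \in S$ because $x \in \PF(S)$; the case $b = 0$ is trivial; and the case $a=b=x$ gives $a+b = 2x \in S$ by the hypothesis $x \in \SG(S)$.

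For the reverse direction, assume $S \cup \{x\}$ is a numerical semigroup. Since $x \notin S$ and $0 \in S$, we have $x \neq 0$. For any $s \in S \setminus \{0\}$, the sum $x+s$ lies in $S \cup \{x\}$, but $x+s \neq x$ (because $s\neq 0$), so $x+s \in S$; this shows $x \in \PF(S)$. Likewise $2x \in S \cup \{x\}$ and $2x \neq x$, so $2x \in S$. Hence $x \in \SG(S)$.

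The only point that requires a second of thought is distinguishing $x+s$ from $x$ in order to land in $S$ rather than just in $S\cup\{x\}$; this is where the hypothesis $x \notin S$ (equivalently $x \neq 0$) is used. Beyond that, the argument is purely definitional and no deeper structural property of numerical semigroups is needed.
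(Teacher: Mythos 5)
Your proof is correct and complete: both directions follow exactly the definitional case analysis you describe, and you correctly isolate the one subtle point (using $x\notin S$, hence $x\neq 0$, to conclude $x+s\neq x$ and $2x\neq x$ so that these sums land in $S$ rather than merely in $S\cup\{x\}$). The paper does not prove this lemma itself but cites it as Proposition 4.33 of \cite{libro}, where the argument is essentially the same standard one you give.
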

As a consequence of Proposition  \ref{proposition6} and \cite[Proposition 2.9]{covariedades} , we have the following result. 

\begin{proposition}\label{proposition10} If $S\in \scrP(F),$ then the set formed by the children of $S$ in the tree $\G(\scrP(F))$ is $\{S\cup \{x\}\mid x\in \SG(S), x<\m(S) \mbox { and }S\cup \{x\}\in \scrP(F)\}.$
\end{proposition}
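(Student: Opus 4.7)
The plan is to verify the statement by a double inclusion, essentially invoking the general covariety machinery already set up. Since Proposition~\ref{proposition6} establishes that $\scrP(F)$ is a covariety, the general result \cite[Proposition 2.9]{covariedades} immediately applies, so the task really reduces to unwinding the definitions of "child in $\G(\scrP(F))$" and of $\SG(S)$, and checking the role of the inequality $x<\m(S)$.

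First I would handle the forward inclusion. Let $T$ be a child of $S$ in $\G(\scrP(F))$. By definition of the edges of $\G(\scrP(F))$, $S=T\setminus\{\m(T)\}$, so setting $x=\m(T)$ one has $T=S\cup\{x\}$. Because $T$ is a numerical semigroup and $x=\m(T)\notin S$, Lemma~\ref{proposition9} gives $x\in\SG(S)$. From $x=\m(T)=\min(T\setminus\{0\})$ and $T=S\cup\{x\}$, with $x\notin S$, I obtain $x<\min(S\setminus\{0\})=\m(S)$. Finally $T=S\cup\{x\}\in\scrP(F)$ by hypothesis.

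For the reverse inclusion, take $x\in\SG(S)$ with $x<\m(S)$ and such that $T:=S\cup\{x\}\in\scrP(F)$. Lemma~\ref{proposition9} ensures $T$ is a numerical semigroup, and the strict inequality $x<\m(S)$ forces $\m(T)=x$. Hence $T\setminus\{\m(T)\}=S$, which is exactly the condition for $(T,S)$ to be an edge of $\G(\scrP(F))$, i.e.\ $T$ is a child of $S$.

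There is no real obstacle: all three ingredients (the characterization of elementary insertions via special gaps, the fact that removing $\m$ from a covariety element stays in the covariety, and the tree structure of $\G(\scrP(F))$) are already stated in the excerpt. The one place that might look delicate is making sure that $T=S\cup\{x\}$ still has Frobenius number $F$, but this is automatic: since $S\in\scrP(F)$ with $S\neq\Delta(F)$ in the nontrivial case, one has $\m(S)\le F$, hence $x<\m(S)\le F$, so $F\notin T$ while every integer greater than $F$ already lies in $S\subseteq T$. Thus the entire argument is a short bookkeeping of definitions on top of Proposition~\ref{proposition6} and Lemma~\ref{proposition9}.
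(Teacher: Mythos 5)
Your proposal is correct and follows essentially the same route as the paper, which derives the statement directly from Proposition~\ref{proposition6} together with the general covariety result \cite[Proposition 2.9]{covariedades}; your explicit double-inclusion unwinding of the definitions is just a self-contained expansion of that citation. (Only your closing aside is slightly off for the root: if $S=\Delta(F)$ then $\m(S)=F+1$, so $x<\m(S)$ alone does not exclude $x=F$; it is the hypothesis $S\cup\{x\}\in\scrP(F)$ that does.)
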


The proof of the following result is immediate.

\begin{lemma}\label{lemma11} Let $S\in \scrP(F),$ $x\in \SG(S)$ and $x<\m(S).$ Then $S\cup \{x\}\in \scrP(F)$ if and only if $x\notin \{2,\m(S)-2,F\}.$	
\end{lemma}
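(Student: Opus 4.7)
The plan is to verify the two defining conditions of $\scrP(F)$—being perfect and having Frobenius number $F$—separately for $T := S \cup \{x\}$, which is already a numerical semigroup by Lemma \ref{proposition9}. For the Frobenius number, note that $T \supseteq S$ yields $\F(T) \leq \F(S) = F$, with equality iff $F \notin T$, iff $x \neq F$. (The value $x = F$ is only compatible with $x < \m(S)$ when $\m(S) = F+1$, i.e.\ $S = \Delta(F)$.) This isolates the first forbidden value.

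For perfectness of $T$ I would use Lemma \ref{lemma3}: whenever $\{s, s+2\} \subseteq T$, we need $s + 1 \in T$. If both $s$ and $s+2$ already lie in $S$, perfectness of $S$ supplies $s+1 \in S \subseteq T$. Otherwise exactly one of $s, s+2$ equals $x$, since $x \neq x+2$. If $s + 2 = x$, then $s = x - 2$ must lie in $S$; but $x < \m(S)$ forces $x - 2 < \m(S)$, so $x - 2 = 0$ and hence $x = 2$, in which case $s + 1 = 1 \notin T$ (using $\m(S) \geq 3$, justified below). If $s = x$, then $x + 2 \in S$, and combined with $x + 2 < \m(S) + 2$ this pins $x + 2 \in \{\m(S), \m(S)+1\}$. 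For $x = \m(S) - 1$ we get $s + 1 = \m(S) \in T$, no obstruction; for $x = \m(S) - 2$ we get $s + 1 = \m(S) - 1 \notin T$, violating the condition. Thus $T$ is perfect iff $x \notin \{2, \m(S) - 2\}$.

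The remaining ingredient is the bound $\m(S) \geq 3$ for every $S \in \scrP(F)$ with $F \geq 2$. If $\m(S) = 2$ then $S$ contains every non-negative even integer, so every gap is odd; hence $F$ is odd with $F \geq 3$, and $F - 1$ and $F + 1$ are both even elements of $S$, making $F$ an isolated gap of $S$ and contradicting perfectness. Combining the three analyses yields the desired equivalence. The main obstacle will be making the perfectness case analysis watertight, in particular cleanly distinguishing $x = \m(S) - 1$ (harmless) from $x = \m(S) - 2$ (fatal), and confirming that $\{2, \m(S) - 2, F\}$ genuinely exhausts the obstructions rather than merely containing them.
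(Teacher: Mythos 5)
Your argument is correct and complete; the paper itself offers no proof (it declares the lemma immediate), and your direct verification---handling the Frobenius number via the equivalence $\F(S\cup\{x\})=F \Leftrightarrow x\neq F$, and perfectness via the two possible positions of $x$ in a pair $\{s,s+2\}\subseteq S\cup\{x\}$, which isolates $x=2$ and $x=\m(S)-2$ as exactly the fatal values---is precisely the check the authors leave to the reader. The only superfluous step is the separate proof that $\m(S)\ge 3$ for perfect $S$ with $F\ge 2$: in the $x=2$ case the hypothesis $x<\m(S)$ already gives $\m(S)\ge 3$ directly, though including the general bound does no harm.
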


The next result is a consequence from Proposition \ref{proposition10} and Lemma \ref{lemma11}.
\begin{proposition}\label{proposition12} If $S\in \scrP(F),$ then the set formed by the children of $S$ in the tree $\G(\scrP(F))$ is $\{S\cup \{x\}\mid x\in \SG(S), x<\m(S) \mbox { and }x\notin \{2,\m(S)-2,F\}\}.$
\end{proposition}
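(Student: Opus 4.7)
The plan is to derive the statement as a direct corollary by chaining Proposition~\ref{proposition10} with Lemma~\ref{lemma11}. Proposition~\ref{proposition10} already describes the children of $S$ in $\G(\scrP(F))$ as the family
\[
\{S\cup\{x\}\mid x\in \SG(S),\ x<\m(S),\ S\cup\{x\}\in \scrP(F)\}.
\]
So the only thing left to do is to translate the membership condition $S\cup\{x\}\in \scrP(F)$ into an explicit arithmetic constraint on $x$. Lemma~\ref{lemma11} does precisely this: under the standing hypotheses $S\in \scrP(F)$, $x\in \SG(S)$ and $x<\m(S)$, it gives the equivalence
\[
S\cup\{x\}\in \scrP(F)\ \Longleftrightarrow\ x\notin\{2,\m(S)-2,F\}.
\]
Substituting this equivalence inside the description of children from Proposition~\ref{proposition10} yields exactly the set stated in the proposition, so the argument should be a one-line combination.

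For intuition (in case the reduction to Lemma~\ref{lemma11} needed to be unpacked), the three forbidden values each correspond to a distinct failure mode. First, $x=F$ would force $\F(S\cup\{x\})<F$, so the Frobenius number would not be preserved. Second, $x=2$ together with $x<\m(S)$ forces $\m(S)\ge 3$, hence $1\notin S\cup\{x\}$ while $\{0,2\}\subseteq S\cup\{x\}$; by Lemma~\ref{lemma3} this creates an isolated gap at $1$. Third, $x=\m(S)-2$ places both $\m(S)-2$ and $\m(S)$ inside $S\cup\{x\}$ while leaving $\m(S)-1$ out, producing an isolated gap at $\m(S)-1$; again Lemma~\ref{lemma3} is violated. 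Conversely, if $x$ avoids all three values, no new isolated gap can be created and $F$ remains the largest missing integer, so the enlarged semigroup lies in $\scrP(F)$.

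There is no genuine obstacle here: the substantive content is packaged in Proposition~\ref{proposition10} (general covariety tree structure) and Lemma~\ref{lemma11} (the three bad values), both of which we are allowed to invoke. The proposition itself is a bookkeeping corollary, and I would write it up in essentially a single sentence stating the substitution.
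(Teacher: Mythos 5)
Your proposal is correct and matches the paper exactly: the paper also obtains Proposition~\ref{proposition12} as an immediate consequence of Proposition~\ref{proposition10} combined with Lemma~\ref{lemma11}, substituting the equivalence $S\cup\{x\}\in\scrP(F)\Leftrightarrow x\notin\{2,\m(S)-2,F\}$ into the description of the children. Your additional unpacking of the three forbidden values is accurate but not needed, since Lemma~\ref{lemma11} is available as stated.
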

\section{Three algorithms}

 Our goal in this section is to describe some algorithms which compute:
\begin{enumerate}
	\item The set $ \scrP(F).$
	\item The maximal elements of $ \scrP(F).$
	\item The elements of  $\scrP(F)$ with a fixed genus.
\end{enumerate}

Let $S$ be a numerical semigroup and $n\in S\backslash \{0\}$. The
Apéry set of $n$ in $S$ (named so in honour of \cite{apery}) is defined as
$\Ap(S,n)=\{s\in S\mid s-n \notin S\}$. 

The following result is deduced from \cite[Lemma 2.4]{libro}.

\begin{lemma}\label{lemma10}
	If $S$ is a  numerical semigroup and $n\in S\backslash \{0\},$ Then $\Ap(S,n)$ is a set with cardinality $n.$ Moreover, $\Ap(S,n)=\{0=w(0),w(1), \dots, w(n-1)\}$, where $w(i)$ is the least
	element of $S$ congruent with $i$ modulo $n$, for all $i\in
	\{0,\dots, n-1\}.$
\end{lemma}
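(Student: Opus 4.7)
The plan is to identify $\Ap(S,n)$ with the set of minimal representatives of the residue classes modulo $n$ inside $S$, and verify both inclusions.

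First, for each $i \in \{0,1,\ldots,n-1\}$, I would consider the set $A_i = \{s \in S : s \equiv i \pmod{n}\}$. Because $\N\setminus S$ is finite, every residue class mod $n$ contains infinitely many elements of $S$, so $A_i \neq \emptyset$ and $w(i) := \min A_i$ is well-defined; in particular $w(0)=0$.

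Next, I would check that $w(i) \in \Ap(S,n)$, that is, $w(i) - n \notin S$. If $w(i) - n < 0$ this is immediate; otherwise $w(i) - n$ is a nonnegative integer congruent to $i$ modulo $n$ strictly smaller than $\min A_i$, so it cannot lie in $S$.

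The final step, which is the main (though mild) obstacle, is to show that every $s \in \Ap(S,n)$ equals some $w(i)$. Writing $i = s \bmod n$, the element $s$ lies in $A_i$, so $s = w(i) + kn$ for some integer $k \geq 0$. If $k \geq 1$, then $s - n = w(i) + (k-1)n$ belongs to $S$, because $w(i), n \in S$ and $S$ is closed under addition; this contradicts $s \in \Ap(S,n)$. Hence $k = 0$ and $s = w(i)$.

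Since the $w(i)$ lie in distinct residue classes modulo $n$, they are pairwise distinct, and the above shows $\Ap(S,n) = \{w(0), w(1), \ldots, w(n-1)\}$ has cardinality exactly $n$. The key observation making the argument work is the closure of $S$ under addition with $n$, which forces $s - n \in S$ whenever $s$ is not the minimal representative of its residue class.
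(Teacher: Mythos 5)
Your proof is correct and is essentially the standard argument: the paper itself gives no proof, only a citation to \cite[Lemma 2.4]{libro}, and the argument there proceeds exactly as you do, taking minimal representatives of residue classes and using closure of $S$ under addition of $n$ for the reverse inclusion. Nothing is missing.
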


Let $S$ be a numerical semigroup. In \cite[Nota 3.5]{covariedades} appears that if we know $\Ap(S,n)$ for some $n\in S\backslash \{0\},$ then we can easily compute $\SG(S).$ And in \cite[Nota 3.8]{covariedades} is showed that if $\Ap(S,n)$ is known for some $n\in S\backslash \{0\},$ then it is very easy to compute $\Ap(S\cup \{x\},n)$ for every $x\in \SG(S).$

\begin{algorithm}\mbox{}\par
	\label{algorithm14}
\end{algorithm}
\noindent\textsc{Input}: An integer $F$ greater than or equal to $2$.
  \par
\noindent\textsc{Output}: $\scrP(F).$

\begin{enumerate}
	\item[(1)] $\scrP(F)=\{\Delta(F)\},$ $B=\{\Delta(F)\}$ and  $\Ap(\Delta(F),F+1)=\{0,F+2,\cdots,2F+1\}.$ 
	\item[(2)] For every $S\in B$ compute $\theta(S)=\{x\in \SG(S)\mid x<\m(S) \mbox{ and } x\notin \{2,\m(S)-2,F\}\}.$
	\item[(3)] If   $\displaystyle\bigcup_{S\in B}\theta(S)=\emptyset,$ then return $\scrP(F).$ 	
	\item[(4)] $C=\displaystyle\bigcup_{S\in B}\{S\cup \{x\}\mid x\in \theta(S)\}.$
	\item[(5)]  $\scrP(F)=\scrP(F)\cup C,$  $B=C$, compute $\Ap(S,F+1)$ for every   $S\in C$  and go to Step $(2).$ 	
\end{enumerate}
In the next example, we show how the previous algorithm works. 

\begin{example}\label{example15} We are going to compute $\scrP(7)$, by using Algorithm \ref{algorithm14}.
	\begin{itemize}
	\item $\scrP(7)=\{\Delta(7)\},$ $B=\{\Delta(7)\}$ and $\Ap(\Delta(7),8)=\{0,9,10,11,12,13,14,15\}.$
	\item $\theta(\Delta(7))=\{4,5\}.$
	\item $C=\{\Delta(7) \cup \{4\},\Delta(7) \cup \{5\}\}.$
	\item $\scrP(7)=\{\Delta(7),  \Delta(7) \cup \{4\},\Delta(7) \cup \{5\}\}$, $B=\{\Delta(7) \cup \{4\},\Delta(7) \cup \{5\}\},$ $\Ap(\Delta(7) \cup \{4\},8)=\{0,4,9,10,11,13,14,15\}$ and $\Ap(\Delta(7) \cup \{5\},8)=\{0,5,9,10,11,12,14,15\}.$
	\item $\theta(\Delta(7) \cup \{4\})=\emptyset,$ $\theta(\Delta(7) \cup \{5\})=\{4\}.$ 
	\item $C=\{\Delta(7) \cup \{4,5\}\}.$
	\item $\scrP(7)=\{\Delta(7), \Delta(7) \cup \{4\},\Delta(7) \cup \{5\}, \Delta(7) \cup \{4,5\}\},$  $B=\{\Delta(7) \cup \{4,5\}\}$ and  $\Ap(\Delta(7) \cup \{4,5\},8)=\{0,4,5,9,10,11,14,15\}.$ 
	\item $\theta(\Delta(7) \cup \{4,5\})=\emptyset.$ 
	\item  Therefore, the Algorithm return $\scrP(7)=\{\Delta(7), \Delta(7) \cup \{4\}, \Delta(7) \cup \{5\},\Delta(7) \cup \{4,5\}\}.$
	
	\end{itemize}
	
\end{example}

Denote by $\Max(\scrP(F))$ the set of maximal elements of $\scrP(F).$ Our next aim will be to present two algorithms which allows to compute $\Max(\scrP(F)).$ For this reason, we will need to introduce some concepts and results.

If $S$ is not a perfect numerical semigroup, then we denote by $\h(S)$ its maximum isolated gap. 
The following result appears in \cite[Proposition 25]{perfectos}.

\begin{lemma}\label{lemma16} If $S$ is not a perfect numerical semigroup, then $S \cup \{\h(S)\}$ is a numerical semigroup.	
\end{lemma}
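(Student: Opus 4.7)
The plan is to reduce the statement to Lemma \ref{proposition9}, which tells us that $S\cup\{h\}$ is a numerical semigroup precisely when $h\in \SG(S)$. So setting $h:=\h(S)$, I would aim to show that $h$ is a special gap of $S$, that is, $h\in \PF(S)$ and $2h\in S$ (we already know $h\notin S$ since $h$ is a gap).

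For the condition $2h\in S$, the trick is to observe that $h$ being an isolated gap gives $\{h-1,h+1\}\subseteq S$, and these two elements add up to $2h$. By closure of $S$ under addition, $2h=(h-1)+(h+1)\in S$. This is the step I expect to be the short but pleasing observation that drives the argument.

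For $h\in \PF(S)$, I would argue by contradiction: suppose there exists $s\in S\setminus\{0\}$ with $h+s\notin S$. Then $h+s>h$ (since $s\geq \m(S)\geq 1$), and both of its neighbours belong to $S$, namely $(h+s)-1=(h-1)+s\in S$ and $(h+s)+1=(h+1)+s\in S$, because $h\pm 1\in S$ and $S$ is closed under addition. Thus $h+s$ would be an isolated gap of $S$ strictly larger than $h$, contradicting the maximality in the definition of $\h(S)$. Hence $h+s\in S$ for every $s\in S\setminus\{0\}$, so $h\in \PF(S)$.

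Combining both parts, $h\in \SG(S)$, and Lemma \ref{proposition9} delivers the conclusion that $S\cup\{\h(S)\}$ is a numerical semigroup. The only potential pitfall is the case $s=0$ in the pseudo-Frobenius check, which is ruled out by the convention $s\in S\setminus\{0\}$; no other subtleties appear, so I do not anticipate a substantial obstacle.
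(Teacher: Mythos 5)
Your proof is correct and complete: the identity $2\h(S)=(\h(S)-1)+(\h(S)+1)\in S$ and the maximality argument showing $\h(S)+s\in S$ for all $s\in S\setminus\{0\}$ together give $\h(S)\in\SG(S)$, and Lemma~\ref{proposition9} then yields the claim. The paper itself does not prove this lemma but merely cites \cite[Proposition~25]{perfectos}, so your argument supplies a clean, self-contained justification consistent with the tools the paper already has in hand.
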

As a consequence of previous lemma, we have the following result.
\begin{lemma}\label{lemma17} If $S$ is a numerical semigroup with Frobenius number $F$ and $F-1\notin S,$ then there exists $T\in \scrP(F)$ such that $S\subseteq T.$	
\end{lemma}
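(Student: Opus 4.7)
The strategy is to fill in the isolated gaps of $S$ one by one, starting from the largest, until no isolated gap remains. Lemma \ref{lemma16} guarantees that each such addition yields a numerical semigroup, so the real content is to verify that this process never accidentally fills $F$ itself, i.e.\ that the Frobenius number is preserved throughout the construction.

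Concretely, define recursively $S_0=S$ and, as long as $S_i$ is not perfect, $S_{i+1}=S_i\cup\{\h(S_i)\}$, which is a numerical semigroup by Lemma \ref{lemma16}. I would prove by induction on $i$ the following two invariants simultaneously:
\begin{enumerate}
\item[(a)] $\F(S_i)=F$;
\item[(b)] $F-1\notin S_i$.
\end{enumerate}
The base case $i=0$ is exactly the hypothesis on $S$. For the inductive step, since $S_i$ is not perfect, $h_i:=\h(S_i)$ exists and satisfies $h_i-1,h_i+1\in S_i$ with $h_i\notin S_i$. From (a), $F\notin S_i$, so $h_i+1\neq F$, hence $h_i\neq F-1$; from (b), $F-1\notin S_i$, so $h_i-1\neq F-1$, hence $h_i\neq F$. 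Combined with the general bound $h_i\le\F(S_i)=F$, we get $h_i\le F-2$. Adding an element below $F-1$ to $S_i$ neither introduces $F$ nor $F-1$ and does not affect the set $\{F+1,F+2,\dots\}\subseteq S_i$, so both (a) and (b) pass to $S_{i+1}$.

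The main obstacle, as just highlighted, is precisely this simultaneous preservation: invariant (a) alone would not forbid $h_i=F$, and invariant (b) alone would not forbid $h_i=F-1$. The two invariants protect each other, and once both are established no isolated gap can ever coincide with $F$ or with $F-1$.

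Since $\sharp(\N\setminus S_i)$ strictly decreases at each step and is finite to begin with, the recursion stops at some index $k$, at which point $S_k$ has no isolated gap, i.e.\ $S_k$ is a perfect numerical semigroup. By invariant (a), $\F(S_k)=F$, so $T:=S_k\in\scrP(F)$, and by construction $S=S_0\subseteq S_1\subseteq\cdots\subseteq S_k=T$, which is the required inclusion.
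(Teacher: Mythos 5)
Your proof is correct and takes essentially the same route the paper intends: the paper states Lemma \ref{lemma17} as an immediate consequence of Lemma \ref{lemma16}, i.e.\ as the iterated gap-filling that produces the perfect closure, and you have simply written out the details. Your explicit verification that the two invariants $\F(S_i)=F$ and $F-1\notin S_i$ protect each other (so that no isolated gap can ever equal $F$ or $F-1$) is exactly the point the paper leaves implicit, and it matches how the same construction is used later in the proof of Proposition \ref{proposition36}.
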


In the next proposition we present a characterization of maximal elements of $\scrP(F).$
\begin{proposition}\label{proposition18} If $S$ is a numerical semigroup, then the following conditions are equivalent.
	\begin{enumerate}
		\item[1)] $S\in \Max(\scrP(F)).$
		\item[2)] $S$ is maximal in the set $\{T\mid T \mbox{ is a numerical semigroup and }T\cap\{F,F-1\}=\emptyset\}.$
	\end{enumerate}
	
\end{proposition}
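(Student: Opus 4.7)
The plan is to prove the equivalence in both directions, where the main work lies in direction (2) $\Rightarrow$ (1) and the principal tool is Lemma \ref{lemma17}.

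For (1) $\Rightarrow$ (2), I would start by observing that if $S \in \Max(\scrP(F))$, then $F \notin S$ by definition and $F-1 \notin S$ because $F+1 \in S$; otherwise $F$ would be an isolated gap, contradicting that $S$ is perfect. Thus $S$ lies in the bigger family. Then I would argue by contradiction: assume there is a numerical semigroup $T$ with $S \subsetneq T$ and $T \cap \{F,F-1\} = \emptyset$. Since $\{F+1,F+2,\ldots\} \subseteq S \subseteq T$ and $F \notin T$, we have $\F(T)=F$. Lemma \ref{lemma17} then yields a $T' \in \scrP(F)$ with $T \subseteq T'$, and hence $S \subsetneq T'$, contradicting the maximality of $S$ inside $\scrP(F)$.

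For (2) $\Rightarrow$ (1), the first step is to show $\F(S) = F$. Suppose instead $\F(S) > F$ and let $F'' = \F(S)$. Since $F'' \in \PF(S)$ and $2F'' \in S$, we have $F'' \in \SG(S)$, so by Lemma \ref{proposition9} the set $S \cup \{F''\}$ is a numerical semigroup. As $F'' > F$, this enlargement still avoids $\{F,F-1\}$, contradicting the maximality assumption. Hence $\F(S) = F$, and then $S$ automatically satisfies the hypotheses of Lemma \ref{lemma17} (the avoidance of $F-1$ is built into the set), giving $T \in \scrP(F)$ with $S \subseteq T$. Since any such $T$ automatically satisfies $T \cap \{F,F-1\} = \emptyset$ (perfectness forbids $F-1 \in T$ as in the first direction), maximality of $S$ forces $S = T \in \scrP(F)$.

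Finally, to upgrade this to maximality inside $\scrP(F)$, I would just note that any $S' \in \scrP(F)$ strictly containing $S$ would again lie in the larger family, again contradicting the hypothesis on $S$. The only delicate point — and the step I expect to be the main obstacle — is the argument that $\F(S) = F$ in the second direction, because Lemma \ref{lemma17} is only applicable once this is established; but the special-gap observation above handles it cleanly.
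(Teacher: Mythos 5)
Your proof is correct and follows essentially the same route as the paper: both directions reduce to Lemma \ref{lemma17}, using that perfectness together with $F+1\in S$ forces $F-1\notin S$. You are in fact slightly more careful than the paper in the direction 2) $\Rightarrow$ 1), where you explicitly verify $\F(S)=F$ (via $\F(S)\in\SG(S)$ and Lemma \ref{proposition9}) before invoking Lemma \ref{lemma17}; the paper leaves that step implicit.
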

\begin{proof}
 {\it 1) implies 2).}	
	We suppose that $S$ is not maximal in the set $\{T\mid T \mbox{ is a numerical semigroup and }T\cap\{F,F-1\}=\emptyset\},$ then there exists $T$ numerical semigroup such that $T\cap\{F,F-1\}=\emptyset$ and $S \subsetneq T.$ It is clear that $\F(T)=F$ and $F-1\notin T.$ Therefore, by applying Lemma \ref{lemma17}, there is $P\in \scrP(F)$ such that $T\subseteq P.$ Thus, $S\subsetneq P$ and consequently, $S\notin \Max(\scrP(F)).$
	
	 {\it 2) implies 1).}	First we show that $S\in \scrP(F). $ Otherwise, by applying Lemma \ref{lemma17}, there exists  $T\in \scrP(F)$ such that $S\subsetneq T.$ It is clear that $T\cap\{F,F-1\}=\emptyset$ and so $S$ is not maximal in the set $\{T\mid T \mbox{ is a numerical semigroup and }T\cap\{F,F-1\}=\emptyset\}.$ 
	 
	 Finally, $S\in \Max(\scrP(F))$ because $\scrP(F)\subseteq \{T\mid T \mbox{ is a numerical semigroup}\\\mbox{and }T\cap\{F,F-1\}=\emptyset\}.$	
\end{proof}
If $C\subseteq \N\backslash \{0\},$ then we denote by $\CL(C)=\{S\mid S \mbox{ is a numerical semigroup }\\ \mbox{and }S\cap C=\emptyset\}.$ Denote by $\Max(\CL(C))$ the set formed by the maximal elements of $\CL(C).$ The Algorithm 1 from \cite{particiones} allows to compute the set $\Max(\CL(C))$ from $C.$ Therefore, by using Proposition \ref{proposition18}, we can assert that we have an algorithm to obtain  $\Max(\scrP(F)).$

The following result is deduced from \cite[Lemma 4.35]{libro}.
\begin{lemma}\label{lemma19}
	Let $S$ and $T$ be numerical semigroups such that $S \subsetneq T.$ Then $\max(T\backslash S)\in \SG(S).$	
\end{lemma}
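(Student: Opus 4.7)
The plan is to prove directly that $x:=\max(T\setminus S)$ satisfies the definition of a special gap, namely that $x\in\PF(S)$ and $2x\in S$. I will use $\SG(S)=\{z\in\PF(S)\mid 2z\in S\}$ together with the definition of pseudo-Frobenius number ($z\notin S$ and $z+s\in S$ for every $s\in S\setminus\{0\}$).

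First I would observe that $x$ is well defined: since $S\subsetneq T$, the set $T\setminus S$ is nonempty, and since $S$ is a numerical semigroup, $\N\setminus S$ is finite, so $T\setminus S\subseteq \N\setminus S$ is a nonempty finite set and possesses a maximum. Because $x\in T\setminus S$ we have $x\notin S$, and because $0\in S$ we also have $x\geq 1$.

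The crucial step is to exploit the maximality of $x$ in $T\setminus S$: any element of $T$ strictly larger than $x$ must lie in $S$. For an arbitrary $s\in S\setminus\{0\}$, the inclusion $S\subseteq T$ and the closure of $T$ under addition give $x+s\in T$; since $s\geq 1$ we have $x+s>x$, hence $x+s\in S$. This proves $x\in\PF(S)$. Applying the same argument with $s$ replaced by $x$ (viewed as an element of $T$) shows $2x=x+x\in T$ and $2x>x$, whence $2x\in S$. Combining these two facts yields $x\in\SG(S)$.

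There is no real obstacle here; the only point requiring mild care is the well-definedness of $\max(T\setminus S)$ and the bookkeeping between membership in $S$ and in $T$. An alternative, equally short route is to invoke Lemma~\ref{proposition9}: one checks that $S\cup\{x\}$ is closed under addition (using exactly the same maximality argument), hence is a numerical semigroup, and therefore $x\in\SG(S)$.
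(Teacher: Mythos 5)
Your proof is correct: the maximality argument (any element of $T$ strictly greater than $x=\max(T\backslash S)$ must lie in $S$) cleanly yields both $x+s\in S$ for $s\in S\backslash\{0\}$ and $2x\in S$, which is exactly the definition of a special gap. The paper does not prove this lemma itself but only cites \cite[Lemma 4.35]{libro}; your direct verification is the standard argument behind that reference, so there is nothing further to reconcile.
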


In the following result we show another characterization of the elements of $\Max(\scrP(F))$ by using the set of special gaps.

\begin{proposition}\label{proposition20}If $S$ is a numerical semigroup, then the following conditions are equivalent.
	\begin{enumerate}
		\item[1)] $S\in \Max(\scrP(F)).$
		\item[2)] $\SG(S)=\{F,F-1\}.$ 
	\end{enumerate}	
\end{proposition}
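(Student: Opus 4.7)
The plan is to build on Proposition \ref{proposition18}, which already reduces $S \in \Max(\scrP(F))$ to the maximality of $S$ among numerical semigroups avoiding the set $\{F, F-1\}$. The two directions then become transparent once we combine Lemma \ref{proposition9} (which identifies one-point enlargements of $S$ with elements of $\SG(S)$) and Lemma \ref{lemma19} (which forces $\max(T\setminus S) \in \SG(S)$ for any proper enlargement $T$).

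For $1)\Rightarrow 2)$, I assume $S\in \Max(\scrP(F))$ and first show the inclusion $\{F,F-1\}\subseteq \SG(S)$. The element $F$ is a special gap because $F\notin S$ and $S\cup\{F\}$ is closed under addition, as all integers strictly greater than $F$ lie in $S$. For $F-1$, the perfectness of $S$ together with $F\notin S$ and $F+1\in S$ forces $F-1\notin S$ (otherwise $F$ would be an isolated gap); and $S\cup\{F-1\}$ is a numerical semigroup, since for any nonzero $s\in S$ one has $s\geq \m(S)\geq 2$, giving $(F-1)+s\geq F+1\in S$, while $2(F-1)\geq F+1$ as well. The reverse inclusion $\SG(S)\subseteq \{F,F-1\}$ comes from maximality: if some $x\in \SG(S)$ lay outside $\{F,F-1\}$, then by Lemma \ref{proposition9}, $S\cup\{x\}$ would be a numerical semigroup disjoint from $\{F,F-1\}$ and strictly containing $S$, contradicting Proposition \ref{proposition18}.

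For $2)\Rightarrow 1)$, I assume $\SG(S)=\{F,F-1\}$. I would first verify $S\in \scrP(F)$: since $\F(S)$ is always a special gap and lies in $\{F,F-1\}$, while $F\notin S$ forces $\F(S)\geq F$, we obtain $\F(S)=F$. Perfectness follows because any isolated gap $h$ would, via Lemma \ref{lemma16}, yield a special gap, hence $h\in \{F,F-1\}$; both choices are ruled out because $F-1\in \SG(S)$ implies $F-1\notin S$ and $F\in \SG(S)$ implies $F\notin S$, while the definition of isolated gap requires $h-1,h+1\in S$. Maximality then follows from Lemma \ref{lemma19}: any numerical semigroup $T\supsetneq S$ satisfies $\max(T\setminus S)\in \SG(S)=\{F,F-1\}$, so $T\cap\{F,F-1\}\neq \emptyset$, and Proposition \ref{proposition18} concludes.

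The main obstacle is the verification that $F-1\in \SG(S)$ for every $S\in \scrP(F)$; although conceptually simple, it is where the perfectness hypothesis is genuinely used, and it tacitly relies on $F$ being large enough that $2(F-1)>F$, so some care must be taken with the very small Frobenius numbers allowed by the standing hypothesis $F\geq 2$.
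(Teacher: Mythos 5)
Your proof is correct and follows essentially the same route as the paper: both directions rest on Proposition \ref{proposition18}, with Lemma \ref{proposition9} yielding $\SG(S)\subseteq\{F,F-1\}$ in one direction and Lemma \ref{lemma19} yielding maximality in the other, and you merely make explicit the inclusion $\{F,F-1\}\subseteq\SG(S)$, which the paper declares clear. Your closing caveat is well taken: for $F=2$ one has $\SG(\Delta(2))=\{2\}$ because $2(F-1)=2\notin\Delta(2)$, even though $\Delta(2)\in\Max(\scrP(2))$, so the equivalence genuinely requires $F\geq 3$ --- a boundary case the paper's proof glosses over.
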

\begin{proof}
	{\it 1) implies 2).}	If $S\in \scrP(F),$ then, it is clear that, $\{F,F-1\}\subseteq \SG(S).$ If   $\SG(S)\neq \{F,F-1\},$ then there is $x\in \SG(S)$ such that $x\notin  \{F,F-1\}.$ Thus, $S\cup \{x\}$ is a numerical semigroup and $\left(S\cup \{x\} \right)\cap \{F,F-1\}=\emptyset.$ By Proposition \ref{proposition18}, we obtain that $S\notin \Max(\scrP(F)).$
	
		{\it 2) implies 1).} We will see that $S$ is a maximal numerical semigroup under the condition that $S\cap \{F,F-1\}=\emptyset.$ Otherwise, there is $T$ a numerical semigroup such that $T\cap \{F,F-1\}=\emptyset$ and $S\subsetneq T.$ By Lemma \ref{lemma19}, we know that $\max(T\backslash S)\in \SG(S)=\{F,F-1\},$ which is absurd. Then, by Proposition \ref{proposition18}, we conclude that $S\in \Max(\scrP(F)).$
\end{proof}

The Algorithm 3.5 from \cite{atomic}, enable us to compute the set $\{S\mid S \mbox{ is a numeri-}\\ \mbox{cal semigroup and } \SG(S)=\{F,F-1\}\}.$ Hence, by Proposition \ref{proposition20}, we have another algorithm to obtain the set $\Max(\scrP(S)).$

Next, we illustrate how the algorithm works.
\begin{example}\label{example21}
By using  Algorithm 3.5 from \cite{atomic}, (see \cite[Example 3.8]{atomic}), we have that 
$\{S\mid S \mbox{ is a numerical semigroup and }\SG(S)=\{11,10\}\}=\{S_1=\{0,6,7,8,9,12,\rightarrow\}, S_2=\{0,4,8,9,12,\rightarrow\}, S_3=\{0,3,6,9,12,\rightarrow\} \}.$ Consequently, by applying Proposition \ref{proposition20}, we have that $\Max(\scrP(11))=\{S_1,S_2,S_3\}.$
\end{example}
\begin{note}\label{note22}
	In  Example \ref{example21}, observe that $\g(S_1)=7$ and $\g(S_2)=\g(S_3)=8.$ Then we can assert that all the elements of $\Max(\scrP(11))$ have not the same genus, in general.  

\end{note}

We can use the following order GAP, to obtain the previous results:

\begin{verbatim}
gap> S1:=NumericalSemigroup(6,7,8,9);
<Numerical semigroup with 4 generators>
gap> Genus(S1);
7
gap> S2:=NumericalSemigroup(4,9,14,15);
<Numerical semigroup with 4 generators>
gap> Genus(S2);
8
gap> S3:=NumericalSemigroup(3,13,14);
<Numerical semigroup with 3 generators>
gap> Genus(S3);
8
\end{verbatim}

If we denote by $\beta(F)=\min\{\g(S)\mid S\in \Max(\scrP(F))\},$ then we can state that $\beta(11)=7.$

We end this section by giving an algorithm which computes all the elements of $\scrP(F)$ with a given genus.

Let $S$ be a numerical semigroup. Define, recursively the {\it associated sequence} to $S$ in the following way: $S_0=S$ and  $S_{n+1} =		S_n\backslash \{\m(S_n)\}$ for all $n\in \N.$

If $S$ is a numerical semigroup, then we denote by $\rN(S)=\{s\in S\mid s<\F(S)\},$ the set of {\it small elements }of $S.$ Its cardinality will be denoted by $\n(S).$ Note that $\g(S)+\n(S)=\F(S)+1.$

If $S$ is a numerical semigroup and $\{S_n\}_{n\in \N}$ is the associated sequence to $S$, then $\Cad(S)=\{S_0,S_1,\cdots, S_{\n(S)-1}\}$  is called {\it the associated chain }to $S.$ It is clear that $S_{\n(S)-1}=\{0,\F(S)+1,\rightarrow\}.$

Next,  if we apply that $\g(S_{i+1})=\g(S_{i})+1$ for all $i\in \{0,\cdots,\n(S)-2\},$ we easily get the following result.

\begin{proposition}\label{proposition23}Under the standing  notation,
	$$\{\g(S)\mid S\in \scrP(F)\}=\{x\in \N\mid \beta(F)\leq x\leq F\}.
	$$
\end{proposition}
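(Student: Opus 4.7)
The plan is to prove the two inclusions separately, with the right-to-left one being driven by the associated chain construction introduced just before the statement.

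For the inclusion $\subseteq$, I would argue as follows. Take any $S\in\scrP(F)$. On the one hand, Lemma \ref{lemma1} tells us that $\Delta(F)$ is the minimum of $\scrP(F)$, hence $\Delta(F)\subseteq S$ and therefore $\g(S)\le \g(\Delta(F))=F$. On the other hand, $\scrP(F)$ is a finite set (there are only finitely many numerical semigroups with Frobenius number $F$), so by a standard Zorn-type argument $S$ is contained in some maximal element $T\in\Max(\scrP(F))$. Since $S\subseteq T$ implies $\g(T)\le \g(S)$, and $\beta(F)\le \g(T)$ by definition of $\beta(F)$, we conclude that $\beta(F)\le \g(S)\le F$.

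For the inclusion $\supseteq$, the key tool is the associated chain $\Cad(S)=\{S_0,S_1,\dots,S_{\n(S)-1}\}$. Pick a maximal element $M\in\Max(\scrP(F))$ with $\g(M)=\beta(F)$, and form its associated chain. By Lemma \ref{lemma5}, applied iteratively, every $S_i$ lies in $\scrP(F)$ (Lemma \ref{lemma5} asserts that as long as $S_i\neq\Delta(F)$, removing $\m(S_i)$ keeps us in $\scrP(F)$; and by construction $S_{\n(S)-1}=\{0,F+1,\rightarrow\}=\Delta(F)$). The remark preceding the proposition gives $\g(S_{i+1})=\g(S_i)+1$, so the genera of the chain form the consecutive sequence $\beta(F),\beta(F)+1,\dots,F$. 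Hence every integer in $[\beta(F),F]$ is realized as $\g(S_i)$ for some $i$, proving $\supseteq$.

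I do not expect a serious obstacle: both inclusions reduce to observations already set up in the paper (Lemma \ref{lemma1}, Lemma \ref{lemma5}, the identity $\g(S_{i+1})=\g(S_i)+1$, and the existence of a maximal containing any given element). The only small care needed is in the right-to-left direction, where one must start the chain at an element of $\Max(\scrP(F))$ whose genus equals $\beta(F)$, so that the chain actually sweeps the full interval $[\beta(F),F]$ rather than a shorter one.
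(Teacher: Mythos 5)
Your proposal is correct and follows essentially the same route the paper intends: the paper's (very terse) justification is precisely the associated-chain argument with $\g(S_{i+1})=\g(S_i)+1$, descending from a genus-$\beta(F)$ maximal element down to $\Delta(F)$, and your $\subseteq$ direction just makes explicit the bounds the paper leaves implicit. No gaps.
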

We illustrate the previous proposition with an example.
\begin{example}\label{example24} Following Note \ref{note22},
as $\beta(11)=7$ then, by applying Proposition \ref{proposition23}, we can assert that $\{\g(S)\mid S\in \scrP(11)\}=\{7,8,9,10,11\}.$
\end{example}

We now have all the necessary tools to obtain the previously announced algorithm.

\begin{algorithm}\mbox{}\par
	\label{algorithm25}
\end{algorithm}
\noindent\textsc{Input}: Two positive  integers $F$  and $g.$ 
\par
\noindent\textsc{Output}: $\{S\in \scrP(F)\mid \g(S)=g\}.$

\begin{enumerate}
	\item[(1)]If $g>F,$ then return $\emptyset.$	
	\item[(2)] Compute $\beta(F).$
	\item[(3)]If $g<\beta(F),$ then return $\emptyset.$
	\item[(4)] $H=\{\Delta(F)\},$ $i=F.$
	\item[(5)] If $i=g,$ then return $H.$
	\item[(6)]	For every $S\in H$ compute $\theta(S)=\{x\in \SG(S)\mid x<\m(S) \mbox{ and } x\notin \{2,\m(S)-2,F\}\}.$	
	\item[(7)] $H=\displaystyle\bigcup_{S\in H}\{S\cup \{x\}\mid x\in \theta(S)\},$ $i=i-1$ 
 and go to Step $(5).$ 	
\end{enumerate}

\section{$\scrP(F)$-system of generators}


We will say that a  set $X$ is a  $\scrP(F)$-{\it set} if $X\cap \Delta(F)=\emptyset$ and there is  $S\in \scrP(F)$ such that $X\subseteq S.$

If $X$ is a $\scrP(F)$-set, then we  denote by $\scrP(F)[X]$  the intersection of all elements of $\scrP(F)$ containing $X.$ As $\scrP(F)$ is a finite set, then by applying Proposition \ref{proposition6}, the intersection of elements of $\scrP(F)$ is again an element of $\scrP(F).$ Therefore, we can ennounce the following proposition.
\begin{proposition}\label{proposition26}
	Let $X$ be a $\scrP(F)$-set. Then $\scrP(F)[X]$ is the smallest element of $\scrP(F)$ containing $X.$
\end{proposition}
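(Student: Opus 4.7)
The plan is to verify that the intersection defining $\scrP(F)[X]$ is itself an element of $\scrP(F)$ and then observe that, by construction, it is contained in every element of $\scrP(F)$ that contains $X$. So the proof is essentially an exercise in assembling the covariety properties with the finiteness of $\scrP(F)$.

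First I would point out that the family $\mathcal{T}(X)=\{T\in\scrP(F)\mid X\subseteq T\}$ is nonempty: by the very definition of a $\scrP(F)$-set, there exists $S\in\scrP(F)$ with $X\subseteq S$, so $S\in\mathcal{T}(X)$. Next I would note that $\scrP(F)$ itself is finite, since every $T\in\scrP(F)$ satisfies $\Delta(F)\subseteq T\subseteq \mathbb{N}$, hence $T$ is determined by the finite set $\{0,1,\dots,F\}\cap T$; in particular $\mathcal{T}(X)$ is finite as well.

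Then the key step: by Lemma \ref{lemma4} (or equivalently by the covariety condition (2) of Proposition \ref{proposition6}), $\scrP(F)$ is closed under binary intersections, and by an immediate induction on the size of the family, it is closed under finite nonempty intersections. Applying this to the finite family $\mathcal{T}(X)$, we obtain that
\[
\scrP(F)[X]=\bigcap_{T\in\mathcal{T}(X)}T\in\scrP(F).
\]
Moreover, each $T\in\mathcal{T}(X)$ contains $X$, so the intersection contains $X$ too; hence $\scrP(F)[X]\in\mathcal{T}(X)$.

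Finally I would close by observing minimality: if $T\in\scrP(F)$ and $X\subseteq T$, then $T\in\mathcal{T}(X)$, so $\scrP(F)[X]\subseteq T$ by the definition of intersection. Therefore $\scrP(F)[X]$ is the smallest (with respect to set inclusion) element of $\scrP(F)$ containing $X$. The only real point to be careful about is making the finiteness of $\scrP(F)$ explicit, since the covariety axiom only supplies closure under binary intersections; once finiteness is in hand, the rest is essentially bookkeeping.
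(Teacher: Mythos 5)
Your proof is correct and follows essentially the same route as the paper: the paper also obtains $\scrP(F)[X]\in\scrP(F)$ by combining the finiteness of $\scrP(F)$ with closure under intersection from Proposition \ref{proposition6}, and then reads off minimality from the definition of the intersection. Your only addition is making explicit why $\scrP(F)$ is finite and why the intersecting family is nonempty, which the paper leaves implicit.
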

If $X$ is a $\scrP(F)$-set and $S= \scrP(F)[X],$ then we will say that $X$ is a $\scrP(F)$-{\it system of generators} of $S.$ Moreover, if $S\neq \scrP(F)[Y]$ for all $Y\subsetneq X,$ then $X$ will be called  a {\it minimal} $\scrP(F)$-{\it system of generators} of $S.$

Let $S$ be a numerical semigroup. Then we denote by 

$$ 
\sR(S)=\{x\in \msg(S)\mid x<\F(S) \mbox{ and } \{x-1,x+1\}\not \subseteq S \} \cup 
$$
$$\{x\in \msg(S)\mid  \{x-1,x+1\}\subseteq S, x+1 \in \msg(S) \mbox{ and } x+1<\F(S) \}. 
$$
\begin{proposition}\label{proposition27}
Let $S\in \scrP(F).$ Then $\sR(S)$ is a $\scrP(F)$-set and $\scrP(F)[\sR(S)]=S.$
\end{proposition}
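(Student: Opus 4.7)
The plan is to verify separately that $\sR(S)$ qualifies as a $\scrP(F)$-set, and then that $S$ is the smallest element of $\scrP(F)$ containing it.

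For the first claim, every element of $\sR(S)$ is a nonzero member of $\msg(S)$ that is strictly smaller than $F$: this is explicit in the first clause, and in the second clause $x+1<F$ forces $x<F-1$. Hence $\sR(S)\cap\Delta(F)=\emptyset$ since $\Delta(F)=\{0,F+1,\rightarrow\}$. Combined with $\sR(S)\subseteq S\in\scrP(F)$, the definition of a $\scrP(F)$-set is satisfied.

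For the second claim, the inclusion $\scrP(F)[\sR(S)]\subseteq S$ is immediate from Proposition \ref{proposition26}, since $S\in\scrP(F)$ contains $\sR(S)$. For the reverse, I fix an arbitrary $T\in\scrP(F)$ with $\sR(S)\subseteq T$ and prove $S\subseteq T$ by strong induction on $x\in S$. The easy subcases handle themselves: $0\in T$; if $x\geq F+1$ then $x\in\Delta(F)\subseteq T$; if $x\in S$ is not a minimal generator, write $x=s+t$ with $s,t\in S\setminus\{0\}$ both strictly less than $x$, and close under addition using the induction hypothesis; if $x\in\sR(S)$, use the standing assumption.

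The only nontrivial case is $x\in\msg(S)\setminus\sR(S)$ with $0<x<F$. Failure of both clauses in the definition of $\sR(S)$ then forces $\{x-1,x+1\}\subseteq S$ together with $x+1\notin\msg(S)$ (note that $x+1=F$ is impossible because $x+1\in S$ while $F\notin S$). By induction $x-1\in T$. Moreover, since $x+1\in S\setminus\msg(S)$, we can decompose $x+1=s+t$ with $s,t\in S\setminus\{0\}$; because the standing hypothesis $F\geq 2$ forces $\m(S)\geq 2$ (otherwise $1\in S$ would give $S=\N$), both $s,t\leq x-1<x$, so the induction hypothesis places $s,t$ in $T$ and hence $x+1\in T$. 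Applying Lemma \ref{lemma3} to the perfect semigroup $T$ with $\{x-1,x+1\}\subseteq T$ yields $x\in T$.

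The subtle point is precisely this last case: to place $x$ in $T$ we need $x+1\in T$, yet $x+1>x$, so naive induction on $x$ is not enough. The second clause in the definition of $\sR(S)$ is engineered exactly so that whenever a generator $x$ satisfying $\{x-1,x+1\}\subseteq S$ escapes $\sR(S)$, the element $x+1$ is automatically reducible into pieces bounded by $x-1$, and the induction closes. The use of $\m(S)\geq 2$ to bound these pieces away from $x$ itself is what makes the argument go through.
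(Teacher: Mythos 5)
Your proof is correct and follows essentially the same route as the paper's: the paper argues by taking the minimal element $a$ of $S\setminus\scrP(F)[\sR(S)]$ and deriving a contradiction, while you run the equivalent strong induction, but both hinge on the same dichotomy ($\{x-1,x+1\}\not\subseteq S$ versus $\{x-1,x+1\}\subseteq S$), the same forced decomposition of $x+1$ when it is not a minimal generator, and the same appeal to perfectness of $T$ via Lemma \ref{lemma3}. Your write-up is in fact slightly more careful than the paper's, spelling out why $x+1\neq F$ and why $\m(S)\ge 2$ bounds the summands of $x+1$ below $x$.
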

\begin{proof}
	It is clear that $\sR(S)$ is a $\scrP(F)$-set and $\sR(S)\subseteq S.$ Therefore, by using Proposition \ref{proposition26}, we have $\scrP(F)[\sR(S)]\subseteq S.$
	
	Let $T=\scrP(F)[\sR(S)]$ and we suppose that $T\subsetneq S.$ Then, there is $a=\min(S\backslash T).$ Obviously $a\in \msg(S)$ and $a<F.$ We distinguish two cases:
	\begin{enumerate}
		\item If $\{a-1,a+1\} \not \subseteq S,$ then by using Lemma \ref{lemma2}, we deduce that $S\backslash \{a\}\in \scrP(F).$ As $T\subseteq S\backslash \{a\},$ then $\sR(S)\subseteq S\backslash \{a\},$ which is absurd because $a\in \sR(S).$
		\item If $\{a-1,a+1\} \subseteq S,$ then by the minimality of $a$, we have that $a-1\in T.$ As $a\notin T$ and $T\in \scrP(F),$ then $a+1\notin T.$ Thus, $a+1\in \msg(S)$ and $a+1<F.$ Consequently, $S\backslash \{a,a+1\}\in \scrP(F)$ and $T \subseteq S\backslash \{a,a+1\}.$ Then $\sR(S)\subseteq S\backslash \{a,a+1\},$ which is absurd because $a\in \sR(S).$ 
	\end{enumerate}
	
\end{proof}
The following result is straightforward to prove.

\begin{lemma}\label{lemma28}
	If $X$ and $Y$ are $\scrP(F)$-sets such that $X\subseteq Y,$ then  $\scrP(F)[X]\subseteq \scrP(F)[Y].$
\end{lemma}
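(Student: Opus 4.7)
The plan is to deduce this directly from Proposition \ref{proposition26}, which characterises $\scrP(F)[Y]$ as the smallest member of $\scrP(F)$ containing $Y$. So the strategy reduces to exhibiting $\scrP(F)[Y]$ as one of the elements of $\scrP(F)$ containing $X$, and then invoking the minimality property of $\scrP(F)[X]$.

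More precisely, I would first check that $\scrP(F)[Y]$ is well-defined: since $Y$ is a $\scrP(F)$-set, some $T\in\scrP(F)$ satisfies $Y\subseteq T$, so the intersection defining $\scrP(F)[Y]$ is taken over a nonempty finite subfamily of $\scrP(F)$, and Proposition \ref{proposition6} (closure under intersection) together with Proposition \ref{proposition26} guarantees that $\scrP(F)[Y]\in\scrP(F)$ and that $Y\subseteq\scrP(F)[Y]$. Then, chaining the inclusions $X\subseteq Y\subseteq\scrP(F)[Y]$, we see that $\scrP(F)[Y]$ is itself an element of $\scrP(F)$ containing $X$.

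Since $\scrP(F)[X]$ is, by Proposition \ref{proposition26}, the smallest element of $\scrP(F)$ containing $X$, it must be contained in any element of $\scrP(F)$ that contains $X$; applying this to the element $\scrP(F)[Y]$ gives $\scrP(F)[X]\subseteq\scrP(F)[Y]$, as required.

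There is essentially no obstacle here: the only subtlety worth flagging is to make sure $\scrP(F)[Y]$ really does belong to $\scrP(F)$ (rather than being merely an intersection of such semigroups), which is precisely what the covariety property from Proposition \ref{proposition6} and the finiteness of $\scrP(F)$ provide. The rest is a textbook closure-operator argument.
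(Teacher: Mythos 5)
Your proof is correct and is exactly the standard argument the paper has in mind: the paper omits the proof as ``straightforward,'' and your reduction via Proposition \ref{proposition26} (that $\scrP(F)[Y]\in\scrP(F)$ contains $X$, so the smallest such element $\scrP(F)[X]$ is contained in it) is the intended reasoning.
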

In the following result we present a characterization of a minimal $\scrP(F)$-system of generators.
\begin{lemma}\label{lemma29}
	Let $X$ be a $\scrP(F)$-set and $S=\scrP(F)[X].$ Then $X$ is a  minimal $\scrP(F)$-system of generators of $S$ if and only if $x\notin \scrP(F)[X\backslash \{x\}]$ for all $x\in X.$
\end{lemma}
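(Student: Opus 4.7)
The plan is to prove both implications by a standard closure-operator argument, leaning entirely on the monotonicity property recorded in Lemma \ref{lemma28} and on the fact that $\scrP(F)[X]$ is the smallest element of $\scrP(F)$ containing $X$ (Proposition \ref{proposition26}).

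For the forward implication, I would argue by contradiction. Assume $X$ is a minimal $\scrP(F)$-system of generators of $S$, and suppose that there exists $x \in X$ with $x \in \scrP(F)[X \setminus \{x\}]$. Since $X \setminus \{x\} \subseteq \scrP(F)[X \setminus \{x\}]$ as well, the set $X$ is contained in $\scrP(F)[X \setminus \{x\}]$, which lies in $\scrP(F)$. By Proposition \ref{proposition26}, $S = \scrP(F)[X]$ is the smallest such element, so $S \subseteq \scrP(F)[X \setminus \{x\}]$. The reverse containment is immediate from Lemma \ref{lemma28}. Thus $\scrP(F)[X \setminus \{x\}] = S$ with $X \setminus \{x\} \subsetneq X$, contradicting the minimality of $X$.

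For the converse, assume $x \notin \scrP(F)[X \setminus \{x\}]$ for every $x \in X$, and suppose toward a contradiction that there exists $Y \subsetneq X$ with $\scrP(F)[Y] = S$. Pick any $x \in X \setminus Y$; then $Y \subseteq X \setminus \{x\}$, so by Lemma \ref{lemma28},
\[
S = \scrP(F)[Y] \subseteq \scrP(F)[X \setminus \{x\}] \subseteq \scrP(F)[X] = S,
\]
forcing $\scrP(F)[X \setminus \{x\}] = S$. But $x \in X \subseteq S = \scrP(F)[X \setminus \{x\}]$, contradicting the hypothesis.

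There is no real obstacle here: the whole argument is a formal manipulation of the hull operator $X \mapsto \scrP(F)[X]$, which is well-defined by Proposition \ref{proposition26} and monotone by Lemma \ref{lemma28}. The only subtle point is to remember that in the converse direction one must actually choose an element $x$ outside $Y$ (available because $Y \subsetneq X$) and use that $x \in S$ to derive the contradiction; this is precisely where the hypothesis bites.
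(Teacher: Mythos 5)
Your proof is correct and follows essentially the same route as the paper's: the necessity part shows that $x\in \scrP(F)[X\setminus\{x\}]$ forces $\scrP(F)[X\setminus\{x\}]=\scrP(F)[X]$ via Proposition \ref{proposition26}, and the sufficiency part picks $x\in X\setminus Y$ and applies Lemma \ref{lemma28} exactly as in the paper. No issues.
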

\begin{proof}
{\it Necessity.} If $x\in \scrP(F)[X\backslash \{x\}],$ then by Proposition \ref{proposition26} we have that \,\,\, every element of $\scrP(F)$ containing  $X\backslash \{x\}$, then it contains $X.$ Therefore,\,\,\,\, $\scrP(F)[X\backslash \{x\}]=\scrP(F)[X].$

	{\it Sufficiency.} If $X$ is not a minimal $\scrP(F)$-system of generators of $S,$ then there exists $Y\subsetneq X$ such that $\scrP(F)[Y]=S.$ If $x\in X\backslash Y,$ then by applying Lemma \ref{lemma28}, we have that $x\in\scrP(F)[Y] \subseteq \scrP(F)[X\backslash \{x\}].$

\end{proof}

In general, the minimal  $\scrP(F)$-systems of generators are not unique. Moreover, they may not even have the same cardinality as we show in the following example.
\begin{example}
	Let $S=\langle 10,11,12,13,14,15,16\rangle=\{0,10,11,12,13,14,15,16,\\ 
	20, \rightarrow\}.$ It is clear that $S\in \scrP(19).$ It is obvious that if $T\in \scrP(19)$ and \\ $\{10,12,14,16\}\subseteq T,$ then $S\subseteq T.$ Hence, $\scrP(19)[\{10,12,14,16\}]=S.$ Furthemore, it is easy to see that $\scrP(19)[\{10,12,14\}]=\{0,10,11,12,13,14,20,\rightarrow\}, $ $\scrP(19)[\{10,12,16\}]=\{0,10,11,12,16,20,\rightarrow\}, $ $\scrP(19)[\{10,14,16\}]=\{0,10,14,\\
	15,16,20,\rightarrow\} $ and $\scrP(19)[\{12,14,16\}]=\{0,12,13,14,15,16,20,\rightarrow\}. $
	Thus, by applying Lemma \ref{lemma29}, we have that $\{10,12,14,16\}$ is a minimal $\scrP(19)$-system of generators of $S.$
	
	Reasoning in a similar way, the reader will have no difficulty in seeing that   $\{10,11,13,15,16\}$ is also a minimal $\scrP(19)$-system of generators of $S.$
\end{example} 

If $S\in \scrP(F),$ then the $\scrP(F)$-$\rank$ de $S$ is defined as
 $\scrP(F)\rank(S)=\min\{ \sharp X\mid X \mbox{ is a } \scrP(F)\mbox{-set and }\scrP(F)[X]=S \}.$
 By applying \cite[Propositions 6.1, 6.2 and 6.4; and Lemma 6.3]{covariedades}, we obtain the following result. 
 
 \begin{proposition}\label{proposition31} If  $S\in \scrP(F)$ then the following conditions hold.
 	
\begin{enumerate}
	\item $\scrP(F)\rank(S)\leq \e(S).$
	\item $\scrP(F)\rank(S)=0$ if and only if $S=\Delta(F).$
	\item  If $S\neq \Delta(F)$ and  $X$ is a $\scrP(F)$-set such that $\scrP(F)[X]=S,$ then 
 $\m(S)\in X.$	
 \item  $\scrP(F)\rank(S)=1$ if and only if $S=\scrP(F)[\{\m(S)\}].$
\end{enumerate} 
\end{proposition}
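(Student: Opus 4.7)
My plan is to handle the four items in order, using Proposition~\ref{proposition27} for (1), the minimality of $\Delta(F)$ in $\scrP(F)$ for (2), Lemma~\ref{lemma5} for (3), and combining (2) with (3) for (4). The core content is already packaged in the earlier lemmas, so what remains is largely an unfolding of the definition of $\scrP(F)\rank$.

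For (1), notice from the definition of $\sR(S)$ that every element of $\sR(S)$ lies in $\msg(S)$ and is strictly less than $\F(S)$, so $\sR(S)\cap\Delta(F)=\emptyset$ and $\sR(S)\subseteq S\in\scrP(F)$; hence $\sR(S)$ is a $\scrP(F)$-set. Proposition~\ref{proposition27} then gives $\scrP(F)[\sR(S)]=S$, so $\scrP(F)\rank(S)\leq \sharp\sR(S)\leq \sharp\msg(S)=\e(S)$. For (2), the empty set is vacuously a $\scrP(F)$-set, and $\scrP(F)[\emptyset]$ equals the intersection of all members of $\scrP(F)$, which by Lemma~\ref{lemma1} is $\Delta(F)$; thus $\scrP(F)\rank(S)=0$ iff $S=\Delta(F)$. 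For (3), suppose $X$ is a $\scrP(F)$-set with $\scrP(F)[X]=S$ and, towards a contradiction, that $\m(S)\notin X$. Then $X\subseteq S\setminus\{\m(S)\}$, and Lemma~\ref{lemma5} gives $S\setminus\{\m(S)\}\in\scrP(F)$; hence $S=\scrP(F)[X]\subseteq S\setminus\{\m(S)\}$, which is absurd.

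For (4), first suppose $S=\scrP(F)[\{\m(S)\}]$. Then $\{\m(S)\}$ must be a $\scrP(F)$-set, which forces $\m(S)\leq F$, and so $S\neq\Delta(F)$; by (2) this means $\scrP(F)\rank(S)\geq 1$, and the exhibited generating singleton gives equality. Conversely, if $\scrP(F)\rank(S)=1$, choose a singleton $\scrP(F)$-set $\{a\}$ with $\scrP(F)[\{a\}]=S$; (2) yields $S\neq\Delta(F)$, and (3) then forces $a=\m(S)$. The only spot calling for real care is (2), where one must check that the empty set vacuously satisfies the two conditions defining a $\scrP(F)$-set and that the intersection taken over \emph{all} elements of a covariety indeed returns its minimum; once this is clear, the remaining items follow directly from the ingredients assembled earlier in the paper.
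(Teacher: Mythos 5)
Your proof is correct. Note that the paper itself gives no argument here: it simply invokes the general covariety machinery of \cite[Propositions 6.1, 6.2 and 6.4; and Lemma 6.3]{covariedades}, applied to the covariety $\scrP(F)$. What you have written is, in effect, a self-contained unfolding of those general facts using only the ingredients of this paper: Proposition~\ref{proposition27} supplies the $\scrP(F)$-specific input for item (1) (via $\sR(S)\subseteq\msg(S)$, so $\sharp\sR(S)\le\e(S)$), Lemma~\ref{lemma1} identifies $\scrP(F)[\emptyset]=\Delta(F)$ for item (2), Lemma~\ref{lemma5} gives the contradiction $S\subseteq S\setminus\{\m(S)\}$ for item (3), and (4) is the combination of (2) and (3) exactly as you describe. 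All the small points you flag as needing care do check out: $\emptyset$ vacuously satisfies both conditions defining a $\scrP(F)$-set, the intersection over all of $\scrP(F)$ returns the minimum $\Delta(F)$, and in (3) the semigroup $S\setminus\{\m(S)\}$ is an element of $\scrP(F)$ containing $X$, so $\scrP(F)[X]$ is contained in it. The only thing your route buys over the paper's is independence from the external reference; the only thing it costs is a few lines.
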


For integers $a$ and $b,$ we say that $a$ {\it divides} $b$ if there exists an integer $c$ such that $b=ca,$ and we denote this by $a\mid b.$ Otherwise, $a$ {\it does not divide} $b$, and we denote this by $a\nmid b.$

The following result has an easy proof.
\begin{lemma}\label{lemma32}
If $m$ is a positive integer such that $m<F,$ $m\nmid F$ and $m\nmid (F-1),$ then $\{m\}$ is a $\scrP(F)$-set and 	$\scrP(F)[\{m\}]=\langle m \rangle \cup \{F+1,\rightarrow\}.$
\end{lemma}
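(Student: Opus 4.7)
The plan is to exhibit the candidate $T=\langle m\rangle \cup \{F+1,\rightarrow\}$ directly and verify three things: that $T$ lies in $\scrP(F)$ and contains $m$, that $\{m\}$ is a $\scrP(F)$-set, and that $T$ is minimal with these properties. The cleanest route is to first confirm $T\in\scrP(F)$, which immediately gives both the $\scrP(F)$-set property for $\{m\}$ (since $m\notin\Delta(F)$ as $0<m<F+1$) and half of the claim; then a short argument will force any $S\in\scrP(F)$ containing $m$ to contain $T$.

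To check that $T$ is a numerical semigroup with $\F(T)=F$, note that $T$ is plainly closed under addition (a sum of two multiples of $m$ is a multiple of $m$; anything above $F$ stays above $F$), it contains $0$, and every integer larger than $F$ belongs to $T$ by construction. The hypothesis $m\nmid F$ ensures $F\notin\langle m\rangle$, hence $F\notin T$, so indeed $\F(T)=F$.

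The main verification is perfection. First, the hypotheses actually force $m\geq 3$: if $m=1$ then $m\mid F$, and if $m=2$ then from $2\nmid F$ we get $F$ odd, whence $2\mid(F-1)$, contradiction in each case. Now I will classify potentially isolated gaps $h$. Within $[0,F]$ the set $T$ consists exactly of the nonnegative multiples of $m$, so for $1\leq h\leq F-1$ both $h-1\in T$ and $h+1\in T$ would require $m\mid(h-1)$ and $m\mid(h+1)$, hence $m\mid 2$, contradicting $m\geq 3$. For $h=F$, the gap $F$ is not isolated because $h-1=F-1\notin T$ by the assumption $m\nmid(F-1)$. And no $h>F$ is a gap at all. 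By Lemma~\ref{lemma3}, $T$ is perfect, so $T\in\scrP(F)$.

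For minimality, let $S\in\scrP(F)$ with $m\in S$. Since $S$ is a submonoid of $(\N,+)$ we have $\langle m\rangle\subseteq S$, and since $\F(S)=F$ every integer greater than $F$ lies in $S$, so $\{F+1,\rightarrow\}\subseteq S$. Thus $T\subseteq S$, and combined with Proposition~\ref{proposition26} this yields $\scrP(F)[\{m\}]=T$. The only subtle step is the perfection check, and everything there is essentially forced by the two divisibility hypotheses.
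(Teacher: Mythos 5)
Your proof is correct and complete; the paper itself omits the argument (it only remarks that the result ``has an easy proof''), and your verification---checking that $T=\langle m\rangle\cup\{F+1,\rightarrow\}$ is a perfect numerical semigroup with Frobenius number $F$ via the observations $m\ge 3$ and $m\nmid(F-1)$, and then noting that any $S\in\scrP(F)$ containing $m$ must contain $T$---is exactly the intended routine one. No gaps.
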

\begin{proposition}\label{proposition33}
	If $m$ is a positive integer such that  $m<F,$ $m\nmid F$ and $m\nmid (F-1),$ then $S=\langle m \rangle \cup \{F+1,\rightarrow\}\in \scrP(F)$ and $\scrP(F)\rank(S)=1.$ Moreover, every element of $\scrP(F)$
 with  $\scrP(F)$-$\rank$ equal to $1$ has this form. 	
\end{proposition}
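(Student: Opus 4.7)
The plan is to prove the two assertions in turn, leaning almost entirely on Lemma \ref{lemma32} and Proposition \ref{proposition31}. For the first claim, I would apply Lemma \ref{lemma32} directly: since $m$ satisfies the stated hypotheses, that lemma gives $\scrP(F)[\{m\}] = \langle m\rangle \cup \{F+1,\rightarrow\} = S$, which immediately places $S$ in $\scrP(F)$ and yields $\scrP(F)\rank(S) \leq 1$ because $\{m\}$ is a $\scrP(F)$-set of cardinality one. To rule out rank zero, I would invoke Proposition \ref{proposition31}(2): rank $0$ occurs only for $\Delta(F)$. Since $1\mid F$, the hypothesis $m\nmid F$ forces $m\geq 2$, so $m$ is a positive integer with $m<F$, hence $m\notin \Delta(F)=\{0,F+1,\rightarrow\}$. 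Since $m\in S$, we conclude $S\neq \Delta(F)$ and therefore $\scrP(F)\rank(S)=1$.

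For the converse, suppose $T\in \scrP(F)$ satisfies $\scrP(F)\rank(T)=1$. By Proposition \ref{proposition31}(4), $T=\scrP(F)[\{\m(T)\}]$. Writing $m=\m(T)$, I would verify in turn the three conditions on $m$. First, $m<F$ follows from $\{m\}$ being a $\scrP(F)$-set (so $m\notin \Delta(F)$, giving $m\leq F$) together with $m\in T$ and $F\notin T$. Second, if $m\mid F$, then $F\in\langle m\rangle \subseteq T$, contradicting $F\notin T$; hence $m\nmid F$. Third, if $m\mid (F-1)$, then $F-1\in \langle m\rangle\subseteq T$, and combined with $F+1\in T$ (since $F=\F(T)$) and $F\notin T$, the integer $F$ would be an isolated gap of $T$, contradicting $T\in \scrP(F)$. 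With the three conditions established, Lemma \ref{lemma32} gives $T=\scrP(F)[\{m\}]=\langle m\rangle \cup\{F+1,\rightarrow\}$, which is the desired form.

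I do not anticipate a real obstacle: the proposition is essentially a repackaging of Lemma \ref{lemma32} with the rank-one criterion from Proposition \ref{proposition31}(4). The only step where the hypothesis that $T$ is \emph{perfect} actually gets used is the argument that $m\nmid(F-1)$ in the converse; this is where I would be most careful, since without perfectness the element $F$ could be an isolated gap and the whole characterization would fail.
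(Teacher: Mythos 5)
Your proof is correct and follows essentially the same route as the paper: Lemma \ref{lemma32} plus Proposition \ref{proposition31} for the direct claim, and Proposition \ref{proposition31}(4) followed by verification of the three hypotheses of Lemma \ref{lemma32} for the converse. The only difference is that you spell out the divisibility checks that the paper dismisses with ``Clearly,'' in particular the use of perfectness to rule out $\m(T)\mid(F-1)$, which is exactly the right justification.
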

\begin{proof}
	By Lemma \ref{lemma32}, we know that $S\in \scrP(F)$ and by Proposition \ref{proposition31} we know that  $\scrP(F)\rank(S)=1.$ If $T\in \scrP(F) $ with $\scrP(F)\rank(T)=1,$ then, Proposition \ref{proposition31} asserts that $\m(T)<F$ and $T=\scrP(F)[\{\m(T)\}].$ Clearly $\m(T)\nmid F$ and $\m(T)\nmid (F-1).$ Finally, by Lemma \ref{lemma32}, we conclude that  $T=\langle \m(T) \rangle \cup \{F+1,\rightarrow\}.$
\end{proof}

Our next goal will be to characterize the elements of $\scrP(F)$ with $\scrP(F)$-$\rank$ equal to $2$. For this purpose we introduce some concepts and results.

If $S$ is a numerical semigroup, we   recursively define the following sequence of numerical semigroups:
\begin{itemize}
	\item $S_0=S$,
	\item $S_{n+1} =\left\{\begin{array}{ll}
		S_n\cup \{\h(S_n)\}  & \mbox{if $S_n$ is not perfect},\\
		S_n & \mbox{otherwise.}
	\end{array}
	\right.$
\end{itemize}

The number of isolated gaps of $S$ will be denoted by $\ii(S).$ The following result appears in \cite[Proposition 26]{perfectos}.
\begin{proposition}\label{proposition34}
	If $S$ is a numerical semigroup and $\{S_n\}_{n\in \N}$ is the sequence previously defined,  then $S=S_0\subsetneq S_1 \subsetneq \dots \subsetneq S_{\ii(S)}.$ Moreover, $S_{\ii(S)}$ is a perfect numerical semigroup and $\sharp \left(S_{k+1}\setminus S_k\right)=1$ for all $k\in \{0,\dots, \ii(S)-1\}.$
\end{proposition}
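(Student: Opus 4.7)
The plan is to reduce everything to a single structural lemma: if $T$ is a numerical semigroup with $\ii(T)\geq 1$, then $T':=T\cup\{\h(T)\}$ is a numerical semigroup containing exactly one extra element and having exactly one fewer isolated gap, so that $\sharp(T'\setminus T)=1$ and $\ii(T')=\ii(T)-1$. Granting this, a one-line induction on $k$ shows that $\ii(S_k)=\ii(S)-k$ for every $0\leq k\leq \ii(S)$, which simultaneously yields the strict chain $S_0\subsetneq S_1\subsetneq\dots\subsetneq S_{\ii(S)}$, the single-element jumps, and the fact that $S_{\ii(S)}$ is perfect.

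For the structural lemma, the first half is immediate: by Lemma \ref{lemma16} the set $T'$ is a numerical semigroup, and since $h:=\h(T)\notin T$ we obtain $\sharp(T'\setminus T)=1$. The delicate point is the behaviour of the isolated gaps when we fill the gap $h$. I would first observe that the gap set of $T'$ is obtained from that of $T$ simply by removing $h$, and then analyse, for each gap $h'$ of $T'$, when $h'$ is isolated in $T'$. Because $h$ is an isolated gap of $T$, both $h-1$ and $h+1$ already belong to $T\subseteq T'$, so neither $h-1$ nor $h+1$ is a gap of $T'$; thus the cases $h'=h-1$ and $h'=h+1$ do not arise. For every other gap $h'$ of $T'$, neither $h'-1$ nor $h'+1$ equals $h$, so $h'-1\in T'$ (resp.\ $h'+1\in T'$) is equivalent to $h'-1\in T$ (resp.\ $h'+1\in T$), and consequently $h'$ is isolated in $T'$ if and only if $h'$ is isolated in $T$. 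Hence the isolated gaps of $T'$ are precisely the isolated gaps of $T$ minus $\{h\}$, giving $\ii(T')=\ii(T)-1$.

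With the lemma in hand, the induction finishes the proof. For any $k<\ii(S)$, the identity $\ii(S_k)=\ii(S)-k>0$ forces $S_k$ to be non-perfect, so the recursive definition gives $S_{k+1}=S_k\cup\{\h(S_k)\}$; the structural lemma then yields $S_k\subsetneq S_{k+1}$, $\sharp(S_{k+1}\setminus S_k)=1$, and $\ii(S_{k+1})=\ii(S_k)-1$. After exactly $\ii(S)$ such steps we arrive at $\ii(S_{\ii(S)})=0$, which is precisely the assertion that $S_{\ii(S)}$ is perfect; the recursion then stabilises at this semigroup.

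The only genuinely subtle point is the isolated-gaps accounting inside the structural lemma; once one notices that the relations $h-1,h+1\in T$ rule out the potentially problematic cases $h'=h\pm 1$, the rest of the bookkeeping is routine. Everything else reduces to Lemma \ref{lemma16} and a direct induction on the defining recursion for $\{S_n\}_{n\in\N}$.
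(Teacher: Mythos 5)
Your proof is correct. The paper itself gives no argument for this proposition (it is quoted from \cite[Proposition~26]{perfectos}), so there is nothing in the text to compare against; your self-contained argument via the structural lemma that filling the maximal isolated gap $h=\h(T)$ removes exactly that one isolated gap, i.e.\ $\ii(T\cup\{h\})=\ii(T)-1$, together with the induction $\ii(S_k)=\ii(S)-k$, establishes all three assertions at once. The one delicate point --- that a gap $h'$ of $T'=T\cup\{h\}$ can never equal $h\pm 1$ (because $h-1,h+1\in T$), so the isolation status of every remaining gap is unchanged --- is exactly the right observation and you handle it correctly, leaning on Lemma~\ref{lemma16} only for the fact that $T'$ is again a numerical semigroup.
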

The numerical semigroup $S_{\ii(S)}$ is called {\it perfect closure }of $S$ and it will denoted by $\rP(S).$ Note that  $\rP(S)$ is the least perfect numerical semigroup that contains $S.$
\begin{lemma}\label{lemma35}
	Let $S\in \scrP(F)$ and $a\in \msg(S)$ such that $\{a-1,a+1\}\not \subseteq S$ and $a<F.$ If $X$ is a $\scrP(F)$-set and $\scrP(F)[X]=S,$ then $a\in X.$
\end{lemma}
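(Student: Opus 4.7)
The plan is to argue by contradiction, exhibiting a strictly smaller element of $\scrP(F)$ that still contains $X$, which would contradict the minimality expressed by $\scrP(F)[X]=S$.

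First, suppose $a\notin X$. I would show that $S\setminus\{a\}$ belongs to $\scrP(F)$. Since $a\in \msg(S)$, Lemma \ref{lemma2} gives that $S\setminus\{a\}$ is a numerical semigroup. The Frobenius number is still $F$ because $a<F$: the integer $F$ was not in $S$ and every integer bigger than $F$ remains in $S\setminus\{a\}$ (as $a<F$ forbids $a$ from being any such integer). The key point is that $S\setminus\{a\}$ is still perfect, which I would verify via Lemma \ref{lemma3}: if $\{s,s+2\}\subseteq S\setminus\{a\}$ then in particular $\{s,s+2\}\subseteq S$, so $s+1\in S$ by perfectness of $S$; and $s+1$ cannot equal $a$, since otherwise $\{s,s+2\}=\{a-1,a+1\}\subseteq S$, contradicting the hypothesis $\{a-1,a+1\}\not\subseteq S$. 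Hence $s+1\in S\setminus\{a\}$, and $S\setminus\{a\}\in\scrP(F)$.

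Next, since $X\subseteq S$ and, by assumption, $a\notin X$, we have $X\subseteq S\setminus\{a\}$. By Proposition \ref{proposition26}, $\scrP(F)[X]$ is the smallest element of $\scrP(F)$ containing $X$, so $\scrP(F)[X]\subseteq S\setminus\{a\}\subsetneq S$, contradicting $\scrP(F)[X]=S$. Therefore $a\in X$.

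I do not expect a serious obstacle here; the only subtle point is the verification that removing $a$ does not create an isolated gap, and the hypothesis $\{a-1,a+1\}\not\subseteq S$ is tailor-made to rule out precisely the one dangerous configuration, namely $s+1=a$ in Lemma \ref{lemma3}'s criterion. The conditions $a\in\msg(S)$ and $a<F$ serve, respectively, to keep $S\setminus\{a\}$ a numerical semigroup and to preserve the Frobenius number.
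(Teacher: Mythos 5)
Your proposal is correct and follows essentially the same route as the paper: assume $a\notin X$, show $S\backslash\{a\}\in\scrP(F)$, and derive the contradiction $\scrP(F)[X]\subseteq S\backslash\{a\}\subsetneq S$ via Proposition \ref{proposition26}. The only difference is that the paper asserts $S\backslash\{a\}\in\scrP(F)$ directly from Lemma \ref{lemma2}, whereas you spell out the (correct and worthwhile) verification that perfectness is preserved, using the hypothesis $\{a-1,a+1\}\not\subseteq S$ to rule out $s+1=a$.
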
 
\begin{proof}
	By Lemma \ref{lemma2}, we deduce that $S\backslash \{a\}\in \scrP(F).$ If $a\notin X,$ then $X\subseteq S\backslash \{a\}.$ Therefore, by applying Proposition \ref{proposition26}, we have that $\scrP(F)[X]\subseteq S\backslash \{a\}.$ Consequently, $S\subseteq S\backslash \{a\},$ which is absurd.
\end{proof}

Now let us define the ratio of a numerical semigroup. This concept  will be  need in the proof of the following proposition.

 Let $S$ be a numerical semigroup such that $S\neq \N,$ the {\it ratio }of $S$ is defined as 
 $\r(S)=\min\{s\in S\mid \m(S)\nmid s \}.$ Note that
 $\r(S)=\min(\msg(S)\backslash \{\m(S)\}).$

\begin{proposition}\label{proposition36}
Let $m$ and $r$ positive integers such that $m<r<F,$ $m\nmid r$ and $\langle m,r \rangle \cap \{F-1,F\}=\emptyset.$ Then $\rP\left(\langle m,r \rangle \cup \{F+1,\rightarrow\} \right)$ is an element of $\scrP(F)$ with $\scrP(F)$-$\rank$ equal to $2.$ Moreover, every element of $\scrP(F)$ with $\scrP(F)$-$\rank$ equal to $2$ has this form.
\end{proposition}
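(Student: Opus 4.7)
The plan is to prove both directions of the proposition separately. For the forward direction, I first verify that $T := \langle m, r\rangle \cup \{F+1,\rightarrow\}$ is a numerical semigroup with Frobenius number exactly $F$: closure under addition is immediate, the hypothesis $\langle m, r\rangle \cap \{F-1, F\} = \emptyset$ together with $F, F-1 < F+1$ gives $F, F-1 \notin T$, and $F+1 \in T$ ensures $\F(T) = F$. A short induction on the perfect-closure sequence $\{S_k\}_{k\in\N}$ then shows that $\{F-1, F\} \cap S_k = \emptyset$ for all $k$: at each step $F$ cannot be the isolated gap $\h(S_k)$ since $F-1 \notin S_k$, and $F-1$ cannot be it since $F \notin S_k$. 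Hence $\rP(T) \in \scrP(F)$. The equality $\scrP(F)[\{m, r\}] = \rP(T)$ follows because any $U \in \scrP(F)$ containing $\{m, r\}$ must contain $\langle m, r\rangle$ by additive closure and $\{F+1,\rightarrow\}$ because $\F(U) = F$, hence $T$; being perfect, $U$ must also contain $\rP(T)$. The reverse inclusion is trivial, so $\scrP(F)\rank(\rP(T)) \leq 2$.

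To see the rank equals $2$, by Proposition \ref{proposition31} it is neither $0$ (as $m \in \rP(T) \setminus \Delta(F)$) nor $1$. If it were $1$, Proposition \ref{proposition33} would force $\rP(T) = \langle a\rangle \cup \{F+1,\rightarrow\}$ with $a = \m(\rP(T))$. The crucial step is to prove $\m(\rP(T)) = m$, which I would do by induction on the closure stage: assuming $\m(S_k) = m$, any isolated gap $g < m$ of $S_k$ satisfies $g+1 \in S_k$ with $g+1 \leq m$, so $g+1 = m$ and $g-1 = m-2 \in S_k$, contradicting $\m(S_k) = m$ whenever $m \geq 3$. The hypotheses rule out $m \leq 2$: if $m=1$ then $\langle m,r\rangle = \N \ni F$, and if $m=2$ then $\langle 2,r\rangle$ contains every even integer and hence meets $\{F-1, F\}$. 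Therefore $\h(S_k) \geq m$ and $\m(\rP(T)) = m$. Since $r \in \rP(T)$, $r < F$, and $m \nmid r$, the element $r$ cannot lie in $\langle m\rangle \cup \{F+1,\rightarrow\}$, ruling out rank $1$.

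For the converse, let $S \in \scrP(F)$ with $\scrP(F)\rank(S) = 2$, and let $X$ be a $\scrP(F)$-set of cardinality $2$ with $\scrP(F)[X] = S$. By Proposition \ref{proposition31}(3) we have $\m(S) \in X$, so $X = \{m, r\}$ with $m := \m(S)$ and $r > m$. Since $r \in X \cap S$ and $X \cap \Delta(F) = \emptyset$, we have $1 \leq r \leq F$, and $r = F$ is impossible as $F \notin S$, so $r < F$. If $m \mid r$, then $r \in \langle m\rangle \subseteq \scrP(F)[\{m\}]$, and Lemma \ref{lemma28} would yield $S = \scrP(F)[\{m,r\}] \subseteq \scrP(F)[\{m\}] \subseteq S$, forcing the rank to be at most $1$; hence $m \nmid r$. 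Finally, $S$ being perfect with $\F(S) = F$ forces $F, F-1 \notin S$ (otherwise $F$ would be an isolated gap of $S$), and since $\langle m, r\rangle \subseteq S$, we conclude $\langle m, r\rangle \cap \{F-1, F\} = \emptyset$. The forward direction then gives $S = \scrP(F)[\{m, r\}] = \rP(\langle m, r\rangle \cup \{F+1,\rightarrow\})$.

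The main technical obstacle is the inductive verification that the multiplicity is preserved throughout the perfect-closure sequence, i.e.\ $\m(\rP(T)) = m$; this is where the cases $m \leq 2$ must be explicitly excluded from the hypotheses. Everything else is a mechanical assembly of Lemma \ref{lemma32}, Propositions \ref{proposition26}, \ref{proposition31}, and \ref{proposition33}, together with the characterization of $\rP(T)$ as the least perfect numerical semigroup containing $T$.
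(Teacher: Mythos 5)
Your proof is correct, and while it follows the same overall skeleton as the paper's (show $\rP(T)\in\scrP(F)$, identify $\rP(T)=\scrP(F)[\{m,r\}]$ to get rank $\leq 2$, rule out ranks $0$ and $1$ via Propositions \ref{proposition31} and \ref{proposition33}, then reverse the construction), it differs in two substantive ways. First, to exclude rank $1$ the paper merely observes that $\rP(T)\neq\langle m\rangle\cup\{F+1,\rightarrow\}$; but Proposition \ref{proposition33} describes rank-one semigroups as $\langle a\rangle\cup\{F+1,\rightarrow\}$ with $a=\m$ of the semigroup, so one really needs to know $\m(\rP(T))=m$ (a priori the closure could have the form $\langle a\rangle\cup\{F+1,\rightarrow\}$ with $a\mid m$ and $a\mid r$, which is compatible with $m\nmid r$). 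Your inductive argument that the perfect-closure sequence never acquires an isolated gap below $m$ once $m\geq 3$, together with the observation that the hypotheses force $m\geq 3$, supplies exactly the missing justification; this is a genuine improvement in rigor over the paper. Second, in the converse the paper invokes Lemma \ref{lemma35} applied to the ratio $\r(S)$ to pin down $X=\{\m(S),\r(S)\}$ (via a somewhat delicate claim that $\{\r(S)-1,\r(S)+1\}\not\subseteq S$), whereas you simply take the second element of $X$ as it is and verify directly that it satisfies $m<r<F$, $m\nmid r$ and $\langle m,r\rangle\cap\{F-1,F\}=\emptyset$; this is shorter, avoids the ratio entirely, and suffices because the statement only asks that $S$ have the stated form for \emph{some} admissible pair $(m,r)$. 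The only cost of your route is that you do not obtain the extra structural information that the second generator can always be taken to be $\r(S)$.
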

\begin{proof}
	If $T=\langle m,r\rangle \cup \{F+1,\rightarrow\},$ then $T$ is a numerical semigroup with Frobenius number $F$ and $F-1\notin T.$ Thus, $\rP(T)\in \scrP(F).$ As $\rP(T)\neq \Delta(F)$ and $\rP(T)\neq \langle m \rangle \cup \{F+1,\rightarrow\},$ then $\scrP(F)\rank(\rP(T))\ge 2.$  Certainly $\rP(T)$ is the least element of $\scrP(F)$ that contains $\{m,r\}$ Hence, $\rP(T)=\scrP(F)[\{m,r\}]$ and so, $\scrP(F)\rank(\rP(T))\leq 2.$ Consequently, $\scrP(F)\rank(\rP(T))=2.$ 
	
	Let $S\in \scrP(F)$ such that  $\scrP(F)\rank(S)=2.$ Then there is a  $\scrP(F)$-set, $X$, with cardinality $2$ such that  $\scrP(F)[X]=S.$  By Proposition \ref{proposition31}, we know that $\m(S)\in X.$ As $\{\r(S)-1,\r(S)+1\}\not \subseteq S$  since it cannot happen that $\m(S)\mid (\r(S)-1)$ and $\m(S)\mid (\r(S)+1),$ then by Lemma \ref{lemma35} we know that $\r(S)\in X.$ Therefore, $X=\{\m(S),\r(S)\}.$ It is clear that $\m(S)<\r(S)<F,$ $\m(S)\nmid \r(S)$ and $\langle \m(S),\r(S)\rangle \cap \{F,F-1\}=\emptyset.$
	
	Finally, as $S=\scrP(F)[X]=\scrP(F)[\{\m(S),\r(S)\}],$ then $S$ is the least element of $\scrP(F)$ containing $\{\m(S),\r(S)\}.$ 
	
	We conclude that  $S=\rP\left( \langle \m(S),\r(S)\rangle \cup 
	\{F+1,\rightarrow\} \right).$	
\end{proof}

Next we illustrate this proposition with an example.
\begin{example}\label{example37}
	Let $m=8,$ $r=11$ and  $F=26.$ Then $8<11<26,$ $8\nmid 11$ and $\langle 8,11 \rangle \cap \{25,26\}=\emptyset.$ By applying Proposition \ref{proposition36}, we have that $\rP\left( \langle 8,11\rangle \cup
	\{27,\rightarrow\} \right)$ is an element of $\scrP(26)$ with $\scrP(26)\rank$ equal to $2.$
	
	Finally, as $ \langle 8,11\rangle \cup
\{27,\rightarrow\}	=\{0,8,11,16,19,22,24,27,\rightarrow\},$ then $$\rP\left( \langle 8,11\rangle \cup 
\{27,\rightarrow\} \right)=\{0, 8,11,16,19,22,23,24,27,\rightarrow\}=\langle 8,11,23,28,29\rangle.$$	
	
\end{example}
\section{ The Arf or saturated elements in $\scrP(F)$ }


We will say that a numerical semigroup $S$ is an {\it Arf numerical semigroup} if $x+y-z\in S$ for all $\{x,y,z\}\subseteq S$ such that $x\ge y \ge z.$ We will denote by $\Arf(F)=\{S\mid S \mbox{ is an Arf numerical semigroup and }\F(S)=F\}.$

Let $A \subseteq \N$ and $a\in A\backslash \{0\}.$ Denote by $\d_A(a)=\gcd\{x\in A\mid x\leq a\}.$ A numerical semigroup is {\it saturated }if $s+\d_S(s)\in S$ for all $s\in S\backslash \{0\}.$ Denote by $\Sat(F)=\{S\mid S \mbox{ is a saturated  numerical semigroup and }\F(S)=F\}.$

In \cite[Lemma 3.31]{libro} it is shown the relation between the saturated numerical semigroups and  the Arf numerical semigroups. That is the following result.

\begin{proposition}\label{proposition38}
	Every saturated numerical semigroup is an Arf numerical semigroup.
\end{proposition}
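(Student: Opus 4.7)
The plan is to deduce the Arf property from saturation by reducing it to a single technical lemma: that for every $x\in S\setminus\{0\}$ and every $n\ge 0$, the element $x+n\,d_S(x)$ lies in $S$. Once this is available, the result follows quickly. Given $x\ge y\ge z$ in $S$ with $z>0$, observe that $d_S(x)=\gcd\{a\in S\mid a\le x\}$ divides every element of $S$ that is at most $x$, in particular both $y$ and $z$. Hence $d_S(x)\mid(y-z)$ and we may write $y-z = n\,d_S(x)$ for some $n\in\N$; applying the lemma to $x$ gives $x+y-z\in S$. The case $z=0$ is immediate from the semigroup property.

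The core of the argument is the lemma, which I would prove by induction on $n$. The base is trivial. For the inductive step, set $w=x+(n-1)d_S(x)\in S$ and $d=d_S(x)$; I need $w+d\in S$. Starting from $w$, iterate the saturation rule: let $w_0=w$ and $w_{i+1}=w_i+d_S(w_i)$. Each $w_i$ lies in $S$ by saturation, the sequence is strictly increasing, and since $w_i\ge w\ge x$ we have the divisibility chain $d_S(w_i)\mid d_S(w)\mid d_S(x)=d$.

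The main obstacle is to rule out that the sequence \emph{jumps over} $w+d$. Suppose for contradiction that $w_i<w+d<w_{i+1}$ for some $i$. I would then observe three divisibilities satisfied by the integer $d_S(w_i)$: it divides $w_i$ (since $w_i\in S$ is among the elements the gcd is taken over), it divides $w$ (since $w\in S$ and $w\le w_i$), and it divides $d$ (by the chain above). Consequently $d_S(w_i)$ divides $w+d-w_i$. But $0<w+d-w_i<w_{i+1}-w_i=d_S(w_i)$, which is a contradiction. Therefore some $w_j$ equals $w+d$ exactly, completing the induction and hence the proof.

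I expect the tricky step to be precisely the non-skipping argument above; the rest is bookkeeping around the definition of $d_S$. Note that no use of minimal generators or of $\F(S)$ is needed, so the proof is completely internal to the saturated axiom.
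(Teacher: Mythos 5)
Your proof is correct, and it is worth noting that the paper does not actually prove this proposition at all: it simply cites \cite[Lemma 3.31]{libro}, so there is no in-paper argument to compare against. Your route is the standard self-contained one, and every step checks out. The reduction of the Arf condition to the claim that $x+n\,\mathrm{d}_S(x)\in S$ for all $n\in\N$ is sound, since $\mathrm{d}_S(x)$ divides every element of $S$ not exceeding $x$, hence divides $y-z$ whenever $x\ge y\ge z$ with $z>0$ (and the case $z=0$ is closure). The heart of the matter is indeed upgrading the one-step axiom $s+\mathrm{d}_S(s)\in S$ to the ``multi-step'' statement, and your non-skipping argument does this correctly: if $w_i<w+d<w_{i+1}$, then $\mathrm{d}_S(w_i)$ divides $w_i$, divides $w$ (because $w\in S$ and $w\le w_i$), and divides $d$ (because $a\le b$ in $S$ forces $\mathrm{d}_S(b)\mid \mathrm{d}_S(a)$, giving $\mathrm{d}_S(w_i)\mid\mathrm{d}_S(w)\mid\mathrm{d}_S(x)=d$), so it would divide the positive integer $w+d-w_i<\mathrm{d}_S(w_i)$, a contradiction; since the sequence $(w_i)$ is strictly increasing and unbounded, it must therefore land exactly on $w+d$. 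In effect you have reproved the known equivalent characterization of saturation ($s+k\,\mathrm{d}_S(s)\in S$ for all $k\in\N$) from the definition used in this paper, which is precisely what the cited lemma in \cite{libro} rests on; what your version buys is that the implication becomes completely internal to the saturation axiom, with no appeal to external results.
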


By applying \cite[Proposition 2.7]{coarf} and \cite[Proposition 3.3]{cosat}, we obtain the following result. 
\begin{proposition}\label{proposition39}
	Under the standing  notation, we have that $\Arf(F)$  and  $\Sat(F)$  are covarieties and $\Delta(F)$ is their  minimum.
\end{proposition}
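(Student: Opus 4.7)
The proof strategy is to appeal to the two cited references for the covariety structure and then verify the identification of the minimum by a direct check. From \cite[Proposition 2.7]{coarf} one extracts that $\Arf(F)$ satisfies the three covariety axioms: nonemptiness with a least element, closure under finite intersection, and closure under removing the multiplicity; analogously, \cite[Proposition 3.3]{cosat} gives the same three properties for $\Sat(F)$. Thus the only content that needs verification here is the identification of the minimum element in each case as $\Delta(F)$.

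To show that $\Delta(F)$ is the minimum of $\Arf(F)$ and of $\Sat(F)$, the plan is first to verify that $\Delta(F)$ actually belongs to both families, and then to observe that any numerical semigroup $S$ with $\F(S)=F$ necessarily satisfies $\Delta(F)\subseteq S$ (since every integer $\ge F+1$ lies in $S$). For the Arf condition on $\Delta(F)$, take $x\ge y\ge z$ in $\Delta(F)$ and split cases on $z$: if $z=0$, then $x+y-z=x+y$, which is either $0$ or at least $F+1$; if $z\ge F+1$, then $x+y-z\ge x\ge F+1$ since $y\ge z$. In either case $x+y-z\in\Delta(F)$. For the saturated condition, if $s\in\Delta(F)\setminus\{0\}$ then $s\ge F+1$, so $s+\d_{\Delta(F)}(s)\ge s+1\ge F+2$, which again lies in $\Delta(F)$.

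Having established both memberships, the minimality is immediate from the containment $\Delta(F)\subseteq S$ for every $S$ in either family. This gives $\min(\Arf(F))=\min(\Sat(F))=\Delta(F)$, matching the statement recalled earlier in the paper that $\min(\Arf(F))=\min(\Sat(F))=\min(\scrP(F))$.

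The main obstacle is essentially bookkeeping: the covariety axioms are already in the literature, and the verifications for $\Delta(F)$ are straightforward case analyses. The only point that requires a little care is making sure the Arf inequality is handled correctly when the three elements are mixed between $0$ and the interval $[F+1,\infty)$; the observation $y\ge z$ forces $x+y-z\ge x$ resolves this uniformly.
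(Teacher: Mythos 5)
Your proposal is correct and follows essentially the same route as the paper, which likewise obtains the covariety structure of $\Arf(F)$ and $\Sat(F)$ directly from \cite[Proposition 2.7]{coarf} and \cite[Proposition 3.3]{cosat}. Your additional direct verification that $\Delta(F)$ is Arf and saturated, together with the observation that $\Delta(F)\subseteq S$ for every numerical semigroup $S$ with $\F(S)=F$, is a sound and self-contained way to confirm the identification of the minimum, which the paper simply imports from the cited results.
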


Following the notation introduced in \cite{perfectos} a  $\Parf$-semigroup (respectively,  $\Psat$) is a perfect numerical semigroup which in addition it is Arf (respectively, saturated).

Denote by $\Parf(F)=\{S\mid S \mbox{ is a $\Parf$-semigroup and }\F(S)=F\}$ and  $\Psat(F)$\\
$=\{S\mid S \mbox{ is a $\Psat$-semigroup and }\F(S)=F\}.$ Our next aim in this section will be to prove that $\Parf(F)$ and $\Psat(F)$ are covarieties.

In \cite[Lemma 5.1]{covariedades} appears the following result.
\begin{lemma}\label{lemma40}
	Let $\{\CC_i\}_{i\in I}$ be a family of covarieties with $\min(\CC_i)=\Delta$ for  all $i\in I.$ Then $\bigcap_{i\in I}\CC_i$ is a covariety with minimum $\Delta.$
\end{lemma}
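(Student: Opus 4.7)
The plan is to verify directly the three defining conditions of a covariety for the intersection $\bigcap_{i\in I}\CC_i$, using that each $\CC_i$ already satisfies them and that all share the same minimum $\Delta$.

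First I would establish that the intersection is nonempty and that $\Delta$ is in fact its minimum. Since $\Delta=\min(\CC_i)\in\CC_i$ for every $i\in I$, we immediately get $\Delta\in\bigcap_{i\in I}\CC_i$. For any $S\in\bigcap_{i\in I}\CC_i$, fixing any index $i$ gives $S\in\CC_i$, so $\Delta=\min(\CC_i)\subseteq S$ with respect to set inclusion; this shows $\Delta$ is the minimum (with respect to $\subseteq$) of the intersection.

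Next I would check closure under intersection of semigroups. If $\{S,T\}\subseteq\bigcap_{i\in I}\CC_i$, then $\{S,T\}\subseteq\CC_i$ for every $i\in I$, and condition 2) applied in each covariety $\CC_i$ gives $S\cap T\in\CC_i$ for every $i$, hence $S\cap T\in\bigcap_{i\in I}\CC_i$.

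Finally I would verify the removal-of-multiplicity condition. Take $S\in\bigcap_{i\in I}\CC_i$ with $S\neq\Delta$; then for each $i\in I$ we have $S\in\CC_i$ and $S\neq\min(\CC_i)$, so condition 3) applied to each $\CC_i$ yields $S\setminus\{\m(S)\}\in\CC_i$. Intersecting over $i$ gives $S\setminus\{\m(S)\}\in\bigcap_{i\in I}\CC_i$, which completes the verification. There is no substantial obstacle here: the argument is a routine ``pointwise'' check that the three covariety axioms are preserved by arbitrary intersections, exploiting only that all $\CC_i$ share the common minimum $\Delta$ so that the hypothesis $S\neq\min(\CC_i)$ in condition 3) transfers uniformly across the family.
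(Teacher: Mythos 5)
Your proof is correct and complete: the direct axiom-by-axiom verification, together with the observation that the shared minimum $\Delta$ is exactly what makes both condition 1) and the hypothesis $S\neq\min(\CC_i)$ of condition 3) transfer uniformly to the intersection, is the natural argument. The paper itself gives no proof of this lemma (it only cites \cite[Lemma 5.1]{covariedades}), so there is nothing to contrast with; your write-up supplies the standard reasoning behind the cited result.
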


By Proposition \ref{proposition6}, \cite[Proposition 2.7]{coarf} and \cite[Proposition 3.3]{cosat}, we know that $\scrP(F),$ $\Arf(F)$ and $\Sat(F)$ are covarieties with minimum $\Delta(F).$ Then by applying Lemma \ref{lemma40}, we have the following result.

\begin{proposition}\label{proposition41}	Under the standing  notation,   $\Parf(F)$ and $\Psat(F)$ are covarieties with minimum $\Delta(F).$
\end{proposition}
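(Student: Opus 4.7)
The plan is to realize $\Parf(F)$ and $\Psat(F)$ as intersections of previously established covarieties and then to invoke Lemma \ref{lemma40}. Concretely, the definitions give
\[
\Parf(F)=\scrP(F)\cap \Arf(F),\qquad \Psat(F)=\scrP(F)\cap \Sat(F),
\]
since a $\Parf$-semigroup is by definition perfect and Arf, and a $\Psat$-semigroup is perfect and saturated, and the Frobenius number condition is common to all three sets on each side.

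The first step is to collect the three covariety facts that we already have at our disposal: Proposition \ref{proposition6} for $\scrP(F)$, and Proposition \ref{proposition39} (which itself is built on \cite[Proposition 2.7]{coarf} and \cite[Proposition 3.3]{cosat}) for $\Arf(F)$ and $\Sat(F)$. Crucially, each of these three covarieties has $\Delta(F)=\{0,F+1,\rightarrow\}$ as its minimum: for $\scrP(F)$ this is Lemma \ref{lemma1}, and for $\Arf(F)$ and $\Sat(F)$ it is part of Proposition \ref{proposition39}. This coincidence of minima is precisely the hypothesis required by Lemma \ref{lemma40}.

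The second step is just the direct application of Lemma \ref{lemma40} to the two-element families $\{\scrP(F),\Arf(F)\}$ and $\{\scrP(F),\Sat(F)\}$, which concludes that $\Parf(F)$ and $\Psat(F)$ are covarieties, each with minimum $\Delta(F)$. To finish cleanly one should also remark that $\Delta(F)$ itself indeed belongs to both intersections, i.e.\ that $\Delta(F)$ is simultaneously perfect, Arf, and saturated; this is immediate since $\Delta(F)$ has no elements between $1$ and $F$ and is closed under addition trivially, so the Arf and saturated conditions are vacuous on the relevant small elements and follow on the large ones.

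There is no real obstacle here: the whole argument is a one-line application of Lemma \ref{lemma40} once the identification of $\Parf(F)$ and $\Psat(F)$ as intersections is made explicit. The only point that merits a sentence of care is the common-minimum hypothesis, since Lemma \ref{lemma40} fails without it; but that hypothesis is precisely what Proposition \ref{proposition39} was stated to supply.
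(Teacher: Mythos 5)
Your proof is correct and is essentially the paper's own argument: the paper likewise writes $\Parf(F)=\scrP(F)\cap\Arf(F)$ and $\Psat(F)=\scrP(F)\cap\Sat(F)$, cites Proposition \ref{proposition6}, \cite[Proposition 2.7]{coarf} and \cite[Proposition 3.3]{cosat} for the common minimum $\Delta(F)$, and applies Lemma \ref{lemma40}. No differences worth noting.
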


Our next purpose will be to present some algorithms to compute the covarieties $\Parf(F)$ and $\Psat(F).$ The following result appears in \cite[Proposition 2.9]{covariedades}.

\begin{lemma}\label{lemma42}
	Let $\CC$ be a covariety and $S\in \CC.$ Then the set formed by the children of $S$ in the tree $\G(\CC)$ is $\{S\cup \{x\}\mid x\in \SG(S), x<\m(S) \mbox{ and }S\cup \{x\}\in \CC \}.$
\end{lemma}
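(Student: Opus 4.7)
The plan is a direct verification of the two inclusions in the claimed set equality, working straight from the definition of the tree $\G(\CC)$, the covariety axioms, and the special gap characterization (Lemma \ref{proposition9}). Recall that, by construction, $T$ is a child of $S$ in $\G(\CC)$ precisely when $(T,S)$ is an edge, i.e.\ when $T\in \CC$ and $S=T\setminus \{\m(T)\}$.

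First I would handle the forward inclusion. Suppose $T$ is a child of $S$ and set $x:=\m(T)$. From $S=T\setminus\{x\}$ we immediately read off $T=S\cup\{x\}$ with $x\notin S$, and $T\in\CC$ by definition of the tree. Since $T=S\cup\{x\}$ is a numerical semigroup, Lemma \ref{proposition9} gives $x\in \SG(S)$. Finally, $x<\m(S)$ because $\m(S)$ is the smallest nonzero element of $T\setminus\{x\}=S$ and $x=\m(T)$ is the smallest nonzero element of $T$, hence $x<\m(S)$. This gives all three required conditions.

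For the reverse inclusion, start with $x\in\SG(S)$ such that $x<\m(S)$ and $T:=S\cup\{x\}\in\CC$. Lemma \ref{proposition9} yields that $T$ is a numerical semigroup. Every nonzero element of $T$ is either $x$ or lies in $S\setminus\{0\}$ (hence is $\ge \m(S)>x$), so $\m(T)=x$. Consequently $T\setminus\{\m(T)\}=T\setminus\{x\}=S$, which means $(T,S)$ is an edge of $\G(\CC)$ and therefore $T$ is a child of $S$.

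There is essentially no obstacle beyond unpacking definitions; the one spot that deserves care is the identification $\m(T)=x$ in the reverse direction, where the hypothesis $x<\m(S)$ is used in an essential way to guarantee that adjoining $x$ to $S$ actually creates a new multiplicity (rather than leaving the existing one untouched). Once this is observed, both inclusions collapse to a symbolic manipulation.
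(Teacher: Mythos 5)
Your proof is correct. Note that the paper itself gives no argument for this lemma: it is imported directly from \cite[Proposition 2.9]{covariedades}, so there is no internal proof to compare against. Your direct verification --- identifying the children $T$ of $S$ with the edges $(T,S)$, i.e.\ with the condition $S=T\setminus\{\m(T)\}$, and then checking both inclusions --- is the natural self-contained route, and the two delicate points are handled properly: in the forward direction you observe $x=\m(T)\notin S$ so that Lemma \ref{proposition9} applies and $x=\m(T)<\m(S)$ because $\m(S)\in S$ while $x\notin S$; in the reverse direction you use $x<\m(S)$ (together with $x\in\SG(S)$, hence $x\notin S$ and $x\neq 0$) to conclude $\m(S\cup\{x\})=x$, which is exactly what makes $(S\cup\{x\},S)$ an edge of $\G(\CC)$.
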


Let $\sA$ and $\sB$ be two covarieties such that $\sB \subseteq \sA$ and $S\in \sB.$  Denote by $
\alpha(\sA,\sB,S)=\min\{x\in\SG(S) \mid x<\m(S) \mbox{ and }S\cup \{x\}\in \sB\}
.$
Depending of the existence of $\alpha(\sA,\sB,S),$ we define

 $$\L(S) =\left\{\begin{array}{ll}
	S\cup \{\alpha(\sA,\sB,S)\}  & \mbox{if there is } \alpha(\sA,\sB,S),\\
	S & \mbox{otherwise.}
\end{array}\right.$$

Define the sequence $\hat{S_0}=S$ and  $\hat{S}_{n+1}=\L(\hat{S}_n)$ for all $n\in \N.$ Obviously,  there exists $\l(\sA,\sB,S)=\min\{x\in \N\mid \L(\hat{S}_x)=\hat{S}_x\}.$

Let $S$ be a numerical semigroup. Then we define the {\it associated sequence } to $S$ in the following form: $S_0=S$ and $S_{n+1}=S_n\backslash \{\m(S_n)\}.$

Let $\sA$ be  a covariety, $S\in \sA$ and $\{S_n\}_{n\in \N}$ the associated sequence to $S.$ Then it is clear that there exists $\C(\sA,S)=\min\{n\in \N\mid S_n=\min(\sA)\}.$ Denote by $\Cad_{\sA}(S)=\{S_0,S_1,\cdots, S_{\C(\sA,S)}\}.$

\begin{proposition}\label{proposition43} 
	Let $\sA$ and $\sB$ be covarieties such that $\sB\subseteq \sA$ and $\min(\sB)=\min(\sA).$ If {\Large $\gamma$}$=\{S\in \sB\mid S \mbox{ has not children in the tree }\G(\sA) \mbox{ which belong to } \sB\},$ then $\sB=\bigcup_{S\in \gamma} \Cad_{\sA}(S).$
	
\end{proposition}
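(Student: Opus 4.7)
The plan is to prove the equality by double inclusion, using the third axiom of a covariety for the forward direction and the $\L$-ascent for the reverse.

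For the inclusion $\bigcup_{S\in\gamma}\Cad_{\sA}(S)\subseteq \sB$, I would fix $S\in\gamma$ and show by induction on $n\in\{0,1,\ldots,\C(\sA,S)\}$ that the $n$th term $S_n$ of the associated sequence to $S$ belongs to $\sB$. The base case is $S_0=S\in\gamma\subseteq\sB$. For the inductive step, whenever $n<\C(\sA,S)$ we have $S_n\neq\min(\sA)=\min(\sB)$, so property (3) of the covariety $\sB$ gives $S_{n+1}=S_n\setminus\{\m(S_n)\}\in\sB$. The terminal term $S_{\C(\sA,S)}=\min(\sA)=\min(\sB)$ lies in $\sB$ by hypothesis, and this covers the whole chain $\Cad_{\sA}(S)$.

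For the reverse inclusion, fix $T\in\sB$ and consider the sequence $\hat T_0=T$, $\hat T_{n+1}=\L(\hat T_n)$ defined before the statement. Since each non-trivial step strictly decreases the genus, the index $n^{*}=\l(\sA,\sB,T)$ is finite. An easy induction on $n$ shows $\hat T_n\in\sB$ for every $n\leq n^{*}$: indeed, whenever $\alpha(\sA,\sB,\hat T_n)$ exists, its defining set forces $\hat T_{n+1}=\hat T_n\cup\{\alpha(\sA,\sB,\hat T_n)\}\in\sB$, and otherwise $\hat T_{n+1}=\hat T_n$. In particular $\hat T_{n^{*}}\in\sB$, and the stabilization $\L(\hat T_{n^{*}})=\hat T_{n^{*}}$ means that no $x\in\SG(\hat T_{n^{*}})$ with $x<\m(\hat T_{n^{*}})$ satisfies $\hat T_{n^{*}}\cup\{x\}\in\sB$. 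By Lemma \ref{lemma42} applied to the covariety $\sA$ (and the fact that $\sB\subseteq\sA$), this says precisely that $\hat T_{n^{*}}$ has no children in $\G(\sA)$ belonging to $\sB$; hence $\hat T_{n^{*}}\in\gamma$.

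It remains to verify that $T\in\Cad_{\sA}(\hat T_{n^{*}})$. The crucial observation is the arithmetic identity $\m(\hat T_{n+1})=\alpha(\sA,\sB,\hat T_n)$: whenever $x=\alpha(\sA,\sB,\hat T_n)$ exists, the definition forces $x<\m(\hat T_n)$, so the new nonzero minimum of $\hat T_n\cup\{x\}$ is $x$, and therefore $\hat T_{n+1}\setminus\{\m(\hat T_{n+1})\}=\hat T_n$. Consequently, descending from $\hat T_{n^{*}}$ via the parent operation $S\mapsto S\setminus\{\m(S)\}$ retraces the $\L$-ascent in reverse, so the associated sequence of $\hat T_{n^{*}}$ begins $\hat T_{n^{*}},\hat T_{n^{*}-1},\ldots,\hat T_0=T$. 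This puts $T$ inside $\Cad_{\sA}(\hat T_{n^{*}})$, completing the inclusion.

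The only subtle point, and the main bookkeeping obstacle, is the identity $\m(\hat T_{n+1})=\alpha(\sA,\sB,\hat T_n)$; once this is in hand, the ascent in $\sB$ and the descent in $\G(\sA)$ match step-for-step, and the rest of the argument is a direct application of the covariety axioms together with Lemma \ref{lemma42}.
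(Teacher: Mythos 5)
Your proof is correct and follows essentially the same route as the paper's: the forward inclusion by iterating covariety axiom (3) along the associated chain, and the reverse inclusion by running the $\L$-ascent from $T$ to an element of {\Large$\gamma$} and observing that the parent map $S\mapsto S\setminus\{\m(S)\}$ retraces that ascent. The paper states both steps without the bookkeeping you supply (in particular the identity $\m(\hat T_{n+1})=\alpha(\sA,\sB,\hat T_n)$), so your write-up is simply a more detailed version of the same argument.
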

\begin{proof}
	As $\sB$ is a covariety, {\Large $\gamma$}$ \subseteq \sB$ and $\min(\sB)=\min(\sA),$ then we easily deduce that $\bigcup_{S\in \gamma} \Cad_{\sA}(S)\subseteq \sB.$ For the other inclusion,  if $S\in \sB$ and $\{\hat{S}_n\}_{n\in\N}$ is the sequence defined previously, then $\hat{S}_{\l(\sA,\sB,S)}\in$ {\Large $\gamma$} and $S\in \Cad_{\sA}(\hat S_{\l(\sA,\sB,S)}).$
\end{proof}
As an immediate consequence of Proposition \ref{proposition43}, we have the following result.
\begin{corollary}\label{corollary44}Under the previous notation,
	\begin{enumerate}
		\item If \,\,{\Large $\gamma$}$=\{S\in \Parf(F)\mid S \mbox{ has not children in the tree }\G(\scrP(F))\mbox{ which be-}\\ \mbox{long to }\Arf(F)\},$ then $\Parf(F)=\bigcup_{S\in \gamma}\Cad_{\scrP(F)}(S).$
		\item If \,\, {\Large $\gamma$}$=\{S\in \Psat(F)\mid S \mbox{ has not children in the tree }\G(\scrP(F))\mbox{ which be-}\\ \mbox{long to }\Sat(F)\},$ then $\Psat(F)=\bigcup_{S\in \gamma}\Cad_{\scrP(F)}(S).$
		
	\end{enumerate}

\end{corollary}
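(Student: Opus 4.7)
The plan is to recognize that both parts of the corollary are direct specializations of Proposition \ref{proposition43}, and the only substantive check is that the set $\gamma$ defined in the corollary agrees with the set appearing in Proposition \ref{proposition43}.

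For part (1), I would take $\sA = \scrP(F)$ and $\sB = \Parf(F)$. I would first verify the hypotheses of Proposition \ref{proposition43}: the inclusion $\Parf(F) \subseteq \scrP(F)$ is immediate from the definition of a $\Parf$-semigroup; both $\scrP(F)$ and $\Parf(F)$ are covarieties (by Proposition \ref{proposition6} and Proposition \ref{proposition41} respectively); and $\min(\Parf(F)) = \min(\scrP(F)) = \Delta(F)$, as recorded in the discussion preceding Proposition \ref{proposition41}. With these hypotheses in hand, Proposition \ref{proposition43} yields $\Parf(F) = \bigcup_{S \in \gamma'} \Cad_{\scrP(F)}(S)$, where $\gamma'$ is the set of $S \in \Parf(F)$ having no child in $\G(\scrP(F))$ belonging to $\Parf(F)$.

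The remaining step is to check that $\gamma' = \gamma$, where $\gamma$ is the set defined in the statement of the corollary (children in $\G(\scrP(F))$ belonging to $\Arf(F)$). Here I would use the key identity $\Parf(F) = \scrP(F) \cap \Arf(F)$. Since any child of $S \in \Parf(F)$ in the tree $\G(\scrP(F))$ already lies in $\scrP(F)$ by construction, such a child belongs to $\Arf(F)$ if and only if it belongs to $\scrP(F) \cap \Arf(F) = \Parf(F)$. Consequently the two conditions defining $\gamma$ and $\gamma'$ coincide, and the conclusion of the corollary follows.

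Part (2) is handled by the same argument verbatim, replacing $\Parf(F)$ with $\Psat(F)$ and $\Arf(F)$ with $\Sat(F)$, and invoking $\Psat(F) = \scrP(F) \cap \Sat(F)$ together with the covariety status of $\Sat(F)$ from Proposition \ref{proposition39}. I do not anticipate any real obstacle here; the only point that requires a moment's care is the passage from "child belonging to $\Arf(F)$ (resp.\ $\Sat(F)$)" in the corollary's formulation to "child belonging to $\sB$" in the formulation of Proposition \ref{proposition43}, and this is handled by the intersection identity noted above.
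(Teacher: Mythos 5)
Your proposal is correct and follows exactly the route the paper intends: Corollary \ref{corollary44} is stated there as an immediate consequence of Proposition \ref{proposition43} applied with $\sA=\scrP(F)$ and $\sB=\Parf(F)$ (resp.\ $\Psat(F)$), using Propositions \ref{proposition6} and \ref{proposition41} for the hypotheses. Your explicit check that ``child in $\G(\scrP(F))$ belonging to $\Arf(F)$'' coincides with ``child belonging to $\Parf(F)$'' via $\Parf(F)=\scrP(F)\cap\Arf(F)$ is precisely the (unstated) bookkeeping the paper leaves to the reader.
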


If $S\in \scrP(F),$ then Algorithm 1 of \cite{coarf}, allows us to determine if a child of $S$ in the tree $\G(\scrP(F))$ is an element of $\Arf(F).$ Therefore, we have an algorithm to compute the set $\Parf(F).$ In a similar way, if $S\in \scrP(F),$  Proposition 3.6 of \cite{cosat}, allows  to determine if a child of $S$ in the tree $\G(\scrP(F))$ is an element of $\Sat(F).$ Therefore, we have an algorithm to compute the set $\Psat(F).$

As a consequence of Proposition \ref{proposition43}, we have the following result.

\begin{corollary}\label{corollary45}Under the asumption notation,
	\begin{enumerate}
		\item If \,\,{\Large $\gamma$}$=\{S\in \Parf(F)\mid S \mbox{ has not children in the tree }\G(\Arf(F))\mbox{ which be-}\\ \mbox{long to }\scrP(F)\},$ then $\Parf(F)=\bigcup_{S\in \gamma}\Cad_{\Arf(F)}(S).$
		\item If \,\,{\Large $\gamma$}$=\{S\in \Psat(F)\mid S \mbox{ has not children in the tree }\G(\Sat(F))\mbox{ which be-}\\ \mbox{long to }\scrP(F)\},$ then $\Psat(F)=\bigcup_{S\in \gamma}\Cad_{\Sat(F)}(S).$
		
	\end{enumerate}

\end{corollary}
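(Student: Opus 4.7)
The plan is to deduce both statements directly from Proposition \ref{proposition43} by carefully matching the hypotheses. The key observation is that Corollary \ref{corollary45} is structurally identical to Proposition \ref{proposition43}: in each case we must identify a pair of covarieties $(\sA,\sB)$ with $\sB\subseteq \sA$ and $\min(\sB)=\min(\sA)$, and then the conclusion is immediate.

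For part (1), I would take $\sA=\Arf(F)$ and $\sB=\Parf(F)$. The facts that need to be checked are: (i) $\Arf(F)$ is a covariety with minimum $\Delta(F)$, which is Proposition \ref{proposition39}; (ii) $\Parf(F)$ is a covariety with minimum $\Delta(F)$, which is Proposition \ref{proposition41}; (iii) $\Parf(F)\subseteq \Arf(F)$, which is immediate from the definition of a $\Parf$-semigroup as a perfect numerical semigroup that is additionally Arf. Once these are in place, Proposition \ref{proposition43} yields exactly the decomposition asserted in the first item, since the set $\gamma$ in the corollary is precisely the set $\gamma$ in Proposition \ref{proposition43} for this choice of $(\sA,\sB)$.

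For part (2), the argument is entirely parallel: take $\sA=\Sat(F)$ and $\sB=\Psat(F)$. Again Proposition \ref{proposition39} supplies that $\Sat(F)$ is a covariety with minimum $\Delta(F)$, Proposition \ref{proposition41} gives the same for $\Psat(F)$, and the containment $\Psat(F)\subseteq \Sat(F)$ is immediate from the definition of a $\Psat$-semigroup. Thus Proposition \ref{proposition43} applies and yields the second decomposition.

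There is essentially no technical obstacle here; the work is confined to checking that all three hypotheses of Proposition \ref{proposition43} hold for the two pairs of covarieties, and each of those checks is either a direct quotation of an earlier proposition or immediate from the definitions. The only subtlety worth flagging is that one should state explicitly that the set $\gamma$ appearing in Corollary \ref{corollary45} literally coincides with the set $\gamma$ defined in the statement of Proposition \ref{proposition43} once $\sA$ and $\sB$ are specialized as above, so that no rewriting of $\gamma$ is needed before invoking the proposition.
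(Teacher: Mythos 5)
Your proposal is correct and is exactly the paper's argument: the paper offers no proof beyond the phrase ``As a consequence of Proposition \ref{proposition43}'', and the intended instantiations are precisely $(\sA,\sB)=(\Arf(F),\Parf(F))$ and $(\sA,\sB)=(\Sat(F),\Psat(F))$, with the hypotheses $\sB\subseteq\sA$ and $\min(\sB)=\min(\sA)=\Delta(F)$ supplied by Propositions \ref{proposition39} and \ref{proposition41} and by the definitions of $\Parf$- and $\Psat$-semigroups. The only point where your ``literal coincidence'' of the two sets $\gamma$ deserves one extra line is that the corollary's condition reads ``children belonging to $\scrP(F)$'' while Proposition \ref{proposition43} with $\sB=\Parf(F)$ (resp. $\Psat(F)$) would read ``children belonging to $\Parf(F)$'' (resp. $\Psat(F)$); these agree because every child in the tree $\G(\Arf(F))$ (resp. $\G(\Sat(F))$) already lies in $\Arf(F)$ (resp. $\Sat(F)$), so membership in $\scrP(F)$ is equivalent to membership in the intersection.
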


We have implemented the gap order \texttt{IsPerfectNumericalSemigroup}, which allows us to know if a  numerical semigroup is perfect. The input is  the minimal system of generators of the numerical semigroup. 

We will see an example to illustrate how this order is used. If we want to know if the numerical semigroups $S_1=\langle 2,3 \rangle$ and $S_2=\langle 4,5,11 \rangle$ are perfect numerical semigroups, we will use the following orders, respectively:
\begin{verbatim}
	gap> IsPerfectNumericalSemigroup([2,3]);
	false
	gap> IsPerfectNumericalSemigroup([4,5,11]);
	true	
\end{verbatim}

Thererfore, by using this order and \cite[Algorithm 2]{coarf} or  \cite[Algorithm 3.10]{cosat}, we get easily an algorithm to compute the set $\Parf(F)$ or $\Psat(F)$ respectively.

\end{document}